\documentclass{amsart}
\usepackage{latexsym,amssymb,amsthm,amsmath}

\usepackage{amssymb}
\usepackage{amsmath}

\theoremstyle{plain}
\newtheorem{theorem}{Theorem}[section]
\newtheorem*{Theorem B}{Theorem B}
\newtheorem*{Theorem A}{Theorem A}
\newtheorem{lemma}{Lemma}[section]
\newtheorem{proposition}{Proposition}[section]
\newtheorem{corollary}{Corollary}[section]

\newtheorem{definition}{Definition}[section]
\newtheorem{example}{Example}[section]
\numberwithin{equation}{section}

\theoremstyle{remark}
\newtheorem{remark}{Remark}[section]

\def\({\left( }
\def\){\right)}
\def\<{\left< }
\def\>{\right> }

\sloppy

\setcounter{page}{1}

\title[Pointwise semi-slant warped products]{Geometry of pointwise semi-slant warped products in locally conformal Kaehler manifolds}

\author{Bang-Yen Chen}
\address{B.-Y. Chen: Department of Mathematics, Michigan State University, 619 Red Cedar Road,   East Lansing, Michigan 48824--1027, U.S.A.}
\email{chenb@msu.edu}
\author{Fatimah Alghamdi}\address{F. Alghamdi: Department of Mathematics, Faculty of Science, Jeddah University, 21589 Jeddah, Saudi Arabia}
\email{fmalghamdi@uj.edu.sa}
\author{Siraj Uddin}
\address{S. Uddin: Department of Mathematics, Faculty of Science, King Abdulaziz University, 21589 Jeddah, Saudi Arabia}
\email{siraj.ch@gmail.com}

\subjclass[2010]{53C15, 53C40, 53C42, 53B25}
\keywords{Warped products; CR-warped product; pointwise semi-slant warped products, locally conformal Kaehler manifold.}

\begin{document}
\begin{abstract} 
 In this paper, we study the geometry of pointwise semi-slant warped products in a locally conformal Kaehler manifold. In particular, we obtain several results which extend Chen's inequality for $CR$-warped product submanifolds in Kaehler manifolds. Also, we study the corresponding equality cases. Several related results on pointwise semi-slant warped products are also proved in this paper.  
    
\end{abstract}

\maketitle

\sloppy

\section{Introduction}

The notion of slant submanifolds of an almost Hermitian manifold were introduced by first author in \cite{C90,C90b} which include both totally real and holomorphic submanifolds. Since then many papers on these submanifolds have been published (see, e.g., \cite{CSA21.1,CSA21.2}). Also, as the generalizations of 
totally real and holomorphic submanifolds, A. Bejancu introduced the notion of CR-submanifolds in \cite{Be78}. CR-submanifolds have also been studied by many geometers (see, e.g. \cite{Be79,Be86,BC79,C81.1,C81.2}).

As a generalization of slant submanifolds,  N. Papaghiuc \cite{Papa94}  introduced the notion of semi-slant submanifolds of an almost Hermitian manifold which includes the classes of CR-submanifolds and slant submanifolds.
As another extension of slant submanifolds, F. Etayo \cite{Etayo} defined the notion of pointwise slant submanifolds of almost Hermitian manifolds under the name of quasi-slant submanifolds. Then, the first author and O. J. Garay  studied in \cite{C12} pointwise slant submanifolds of Kaehler manifolds and they proved several fundamental results of such submanifolds and provided a method to construct examples of such submanifolds. 

In the 1960s, R.L. Bishop and B. O'Neill defined warped product manifolds. It is well-known that warped product manifolds play important role in differential geometry as well as in physics. At the beginning of this century, the first author initiated  in \cite{C01a,C01b,C02} the study of warped products  in Riemannian and Kaehler manifolds from the submanifold point of view.  He proved in \cite{C01a} that there do not exist warped products of the form:
$N^{\perp}\times_f N^{T}$  in any Kaehler manifold beside $CR$-products,  where
$N^{\perp}$ is a totally real submanifold and $N^{T}$ is a holomorphic submanifold. He also shown in \cite{C01a} that there exist many $CR$-submanifolds which are  warped products of the form $N^{T}\times_f N^{\perp}$ {\it by reversing the two factors $N^{T}$ and $N_\perp$}. He simply called such
$CR$-submanifold a $CR$-{\it warped product}.  
Furthermore, he proved that every $CR$-warped product $N^{T}\times_f N^{\perp}$ in any Kaehler manifold satisfies the basic inequality: 
\begin{align}\label{1.1} ||h||^2\geq 2p\|\vec\nabla(\ln f)\|^2,\end{align} where $f$ is the warping function, $p$ is the dimension of $N^{\perp}$, $||h||^2$ the squared norm  of the second fundamental form, and
$\vec\nabla(\ln f)$  the gradient of $\ln f$. 
Since then the geometry of warped product submanifolds becomes an active  research subject (for more details, we refer to Chen's books \cite{book, book17} and his survey article \cite{C13}). 

In \cite{Sahin06}, B. Sahin proved that there do not exist semi-slant warped products in a Kaehler manifold. Later, he investigated warped product pointwise semi-slant submanifolds of Kaehler manifolds in \cite{Sahin13}. Further, V, Bonanzinga and K. Matsumoto \cite{BK04} studied $CR$-warped product submanifolds and semi-slant warped product submanifolds in locally conformal Kaehler manifolds either of the form $N^T\times_fN^\perp$ and or of $N^T\times{_{f}N^\theta}$, where $N^{T}, N^{\perp}$  and $N^{\theta}$ are  holomorphic, totally real and slant submanifolds, respectively (see also \cite{K17}). Since then, such warped product submanifolds have been studied by many geometers (see, e.g.,  \cite{Al1,book17}).

In this paper, we investigate pointwise semi-slant warped products in locally conformal Kaehler manifolds. In particular, we obtain several results which extend Chen's inequality \eqref{1.1}; and we also study the corresponding equality cases. Several related results  are also established in this paper.


\section{Preliminaries}\label{Sec2}

A {\it locally conformally Kaehler manifold} $(\tilde M,J,g)$ of (or an $LCK$-manifold for short) is a complex manifold $(\tilde{M}, J)$ endowed with a Hermitian metric $g$ which is locally conformal to a Kaehlerian metric. Equivalently,  there
exists an open cover  $\{U_{i}\}_{i\in I}$ of $\tilde M$ and a family $\{f_{i}\}_{i\in I}$ of real-valued differentiable functions $f_{i} : U_{i} \to \mathbb{R}$ such that $g_{i}= e^{-f_{i}} g|_{U_{i}}$ is a Kaehlerian metric on $U_{i}$, i.e., $\nabla^{*}J = 0$, where $J$ is the almost complex structure, $g$ is the Hermitian metric, and $\nabla^{*}$ is the covariant differentiation with respect to $g$. A typical example of a compact $LCK$-manifold is a Hopf manifold which is diffeomorphic
to $\mathbb{S}^{1} \times\mathbb{S}^{2n-1}$ and it admits no Kaehler structure (see \cite{Va76}). 
Let $\Omega$ and $\Omega_{i}$ denote the 2-forms associated with $(J,g)$ and  $(J,g_{i})$, respectively (i.e., $\Omega(X,Y)=g(X,JY)$, etc.). Then $\Omega_{i}=e^{-f_{i}}\Omega|_{U_{i}}$. 

The following result from \cite{Va76} is well-known  (see also \cite[Theorem 1.1]{DS98}).

\begin{theorem}\label{T1} The Hermitian manifold $(\tilde{M},J,g)$ is an $LCK$-manifold if and only if there exists a globally defined closed $1$-form $ \alpha$ on $\tilde{M}$ such that $d\Omega=\alpha\wedge \Omega$.
\end{theorem}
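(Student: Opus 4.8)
The plan is to prove both implications by a direct local computation with the conformal factors, the only genuine subtlety being the globalization of the resulting $1$-form. Throughout I would use the standard fact that a Hermitian metric is Kaehlerian precisely when its associated fundamental $2$-form is closed, so that the condition $\nabla^{*}J=0$ on each $U_i$ is equivalent to $d\Omega_i=0$.

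For the forward implication, assume $(\tilde M,J,g)$ is $LCK$ with cover $\{U_i\}$ and functions $f_i$ so that $\Omega_i=e^{-f_i}\Omega|_{U_i}$ is the Kaehler form of $g_i$. Differentiating $\Omega_i=e^{-f_i}\Omega$ and using $d\Omega_i=0$ gives
\[
0=d\Omega_i=e^{-f_i}\big(d\Omega-df_i\wedge\Omega\big)\quad\text{on }U_i,
\]
hence $d\Omega=df_i\wedge\Omega$ on each $U_i$. I would then set $\alpha:=df_i$ on $U_i$; each such piece is exact, hence closed. The remaining task is to check that these local $1$-forms patch to a single globally defined $\alpha$.

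For well-definedness on an overlap $U_i\cap U_j$, both $df_i\wedge\Omega$ and $df_j\wedge\Omega$ equal $d\Omega$, so $(df_i-df_j)\wedge\Omega=0$. Here the hard part enters: I would invoke the pointwise linear-algebra fact that, on an almost Hermitian space of real dimension at least $4$, the Lefschetz-type map $\beta\mapsto\beta\wedge\Omega$ from $1$-forms to $3$-forms is injective. This forces $df_i=df_j$ on $U_i\cap U_j$, so that $\alpha$ is globally defined and closed with $d\Omega=\alpha\wedge\Omega$, as required. (In real dimension $2$ the wedge map is degenerate, so I would adopt the usual convention $\dim_{\mathbb C}\tilde M\ge 2$.)

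For the converse, suppose a globally defined closed $1$-form $\alpha$ satisfies $d\Omega=\alpha\wedge\Omega$. Since $\alpha$ is closed, the Poincar\'e lemma yields an open cover $\{U_i\}$ by contractible sets together with functions $f_i$ satisfying $\alpha=df_i$ on $U_i$. Setting $g_i=e^{-f_i}g|_{U_i}$, a Hermitian metric with associated form $\Omega_i=e^{-f_i}\Omega|_{U_i}$, I would compute
\[
d\Omega_i=e^{-f_i}\big(d\Omega-df_i\wedge\Omega\big)=e^{-f_i}\big(\alpha\wedge\Omega-\alpha\wedge\Omega\big)=0.
\]
Thus each $\Omega_i$ is closed, so each $g_i$ is Kaehlerian, and $g$ is locally conformally Kaehler by definition. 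The main obstacle in the whole argument is the globalization step in the forward direction, which rests entirely on the injectivity of wedging with $\Omega$; the two conformal computations themselves are routine.
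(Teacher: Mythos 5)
Your proof is correct; the paper itself gives no proof of this theorem, quoting it as a well-known result of Vaisman \cite{Va76} (see also \cite{DS98}), and your argument --- the two conformal computations $d\Omega_i=e^{-f_i}(d\Omega-df_i\wedge\Omega)$ together with the injectivity of the map $\beta\mapsto\beta\wedge\Omega$ on $1$-forms to glue the local pieces $df_i$ into a global closed $\alpha$ --- is precisely the standard proof found in those references. The only delicate point is the degeneracy in complex dimension one, which you correctly identify and exclude by the usual convention $\dim_{\mathbb C}\tilde M\ge 2$.
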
 

 The closed 1-form $\alpha$ in Theorem \ref{T1} is called the {\it Lee form} and the vector field $\lambda=\alpha^{\#}$ dual to $\alpha$, (i.e., $g(X,\lambda)=\alpha(X)$ for $X\in T\tilde M$), is called  the {\it Lee vector field} of the $LCK$-manifold $\tilde M$. 
 An $LCK$-manifold $(\tilde {M}, J, g)$ is called a {\it globally conformal Kaehler manifold} ($GCK$-manifold for short)  if one can choose $U=\tilde M$. An $LCK$-manifold  $\tilde{M}$ is a $GCK$-manifold if and only if the 1-form $\alpha $  is exact.

If $\tilde{\nabla}$ denotes the Levi-Civita connection on an $LCK$-manifold  $\tilde{M}$, then we have
\begin{align}\label{2.1}
(\tilde{\nabla}_X J)Y = -g(\beta ^{\#},Y )X -g(\alpha^{\#}, Y )JX + g(JX, Y )\alpha ^{\#} +g(X, Y )\beta^{\#} .
\end{align}
for tangent vector fields $ X, Y$ on $\tilde M$, where $ \alpha^{\#}$ is  the Lee vector field, $\beta $ is the 1-form defined by
$\beta(X) =-\alpha(JX)$ for any $X$ $\in T \tilde M$ and $\beta^{\#}$ denotes the dual vector field of $\beta$ (see \cite{Va76}). In terms of the Lee vector field, equation \eqref{2.1} can be written as
\begin{align}\label{2.2}
(\tilde\tilde{\nabla}_X J)Y = [g(\lambda, JY )X-g(\lambda, Y )JX + g(JX, Y )\lambda + g(X, Y )J\lambda].
\end{align}

$LCK$-manifolds  contain rich source since their Lee-form plays important role in determining several geometric features of their submanifolds.
An $LCK$-manifold $(\tilde{M}, J, g)$ is called a {\it Vaisman manifold} if its Lee form is parallel, i.e., $\tilde\nabla \alpha = 0$ (see \cite{Va76,Va80}). Vaisman manifolds form the most important subclass of $LCK$-manifolds.

Let $(\tilde{M}, J, g,\alpha)$ be a complex $m$-dimensional $LCK$-manifold and $M$ be a real $n$-dimensional Riemannian manifold isometrically immersed in $\tilde{M}$ with $n\leq m$. Denote by $\Gamma(TM)$ and $\Gamma(T^\perp M)$ the spaces of tangent and normal vector fields of $M$, respectively. Let $g$ also denote the induced metric tensor on $M$ and by $\nabla$ the covariant differentiation with respect to the induced metric on $M$. Then the Gauss and
Weingarten formulas for $M$ are given respectively by
\begin{align}
\label{2.3}
&\tilde\nabla_XY=\nabla_XY+h(X, Y),
\\&\label{2.4}
\tilde {\nabla}_X\xi=-A_\xi X+ \nabla^{\perp}_X\xi,
\end{align}
for any $X, Y\in\Gamma(TM)$ and $\xi\in\Gamma(T^\perp M)$, where $\nabla^\perp$ is the normal connection,  $h$  is the second fundamental form,  and $A_\xi$ is the Weingarten map (or the shape operator) associated with $\xi$. It is well-known that $A_\xi$ and $h$ are related by
\begin{align}
\label{2.5}g(h(X, Y), \xi)=g(A_\xi X, Y).\end{align}
 A submanifold  $M$ is said to be {\it totally geodesic} if its second fundamental form $h$ vanishes identically, i.e., $h = 0$, or equivalently $A = 0$.

For any vector $X$ tangent to $M$, we write
\begin{align}
\label{2.6} JX=PX+F X,\end{align}
where $PX$ and $FX$ are the tangential and normal components of $JX$, respectively. Similarly, for any vector $\xi$ normal to $M$, we put
\begin{align}
\label{2.7}J\xi=t\xi+f\xi,\end{align}
where $t\xi$ and $f\xi$ are the tangential and normal components of $J\xi$, respectively.

 It was known in \cite{C12} that a submanifold $M$ of an almost Hermitian manifold $\tilde M$ is {\it pointwise slant} if and only if
 \begin{align}\label{2.8} P^2=-(\cos^2\theta)I,\end{align}
for some real-valued function $\theta$ defined on $M$, where $I$ is the identity map of the tangent bundle $TM$ of $M$. A pointwise slant submanifold is called {\it proper} if it does not contain any totally real or complex points, i.e., $0<\cos^2\theta<1$.

At a given point $p\in M$, the following relations are easy consequences of (\ref{2.8}):
\begin{align}
\label{2.9}
g(PX, PY)=(\cos^2\theta)\,g(X, Y),
\end{align}
\begin{align}
\label{2.10}
g(FX, FY)=(\sin^2\theta)\,g(X, Y)
\end{align}
for any $X, Y\in\Gamma(TM)$.
Further, it is easy to verify that
\begin{align}
\label{2.11}
tFX=-\sin^2\theta\,X,\quad fFX=-FPX
\end{align}
for any $X\in\Gamma(TM)$.

\begin{definition}\label{D2} \rm{Let $\tilde M$ be an almost Hermitian manifold and $M$ be a submanifold of $\tilde M$. Then $M$ is called a {\it pointwise semi-slant submanifold} if there exists a pair of orthogonal distributions ${\mathfrak{D}}$ and ${\mathfrak{D}}^\theta$ on $M$ such that
\begin{enumerate}
\item [(i)] The tangent bundle $TM$ is the orthogonal decomposition $TM={\mathfrak{D}}\oplus{\mathfrak{D}^\theta}$.
\item [(ii)]  The distribution ${\mathfrak{D}}$ is $J$-invariant (or holomorphic), i.e., $J({\mathfrak{D}})={\mathfrak{D}}.$
\item [(iii)]  The distribution ${\mathfrak{D}^\theta}$ is pointwise slant with slant function $\theta$.
\end{enumerate}}
The pointwise semi-slant submanifold $M$ is called {\it proper} if  neither $\dim\, N^T=0$ nor the slant function of ${\mathfrak{D}}^\theta$ is $\frac{\pi}{2}$, i.e., $\cos^2\theta>0$. Otherwise, $M$ is called {\it improper}. 
\end{definition}

\begin{definition}\label{D3} \rm{Let $M$ be a submanifold of an almost Hermitian manifold. Then
\begin{enumerate}
\item [(i)] $M$ is called {\it $\mathfrak{D}$-geodesic} if $h(X, Y) = 0$,  
\item [(ii)]  $M$ is called {\it ${\mathfrak{D}^\theta}$-geodesic} if $h(Z,W) = 0$, 
\item [(iii)] $M$ is called {\it mixed totally geodesic} if $h(X, Z) = 0$,
\end{enumerate}}
\noindent for all $X,Y\in\Gamma (\mathfrak{D})$ and for all $Z, W\in\Gamma (\mathfrak{D}^\theta)$.
\end{definition}

\section{Some lemmas}\label{Sec3}

First, we prove the following results for later use.

\begin{lemma}\label{L3.1}
 Let $ M$ be a proper pointwise semi-slant submanifold of an $LCK$-manifold. Then, for any $ X, Y \in\Gamma (\mathfrak{D})$ and $Z \in\Gamma (\mathfrak{D}^\theta)$, we have
\begin{align*}
\label{2.13}
\sin^2\theta g(\nabla_XY,Z)=\,&g(A_{FZ}JY-A_{FPZ}Y,X)-g(JX,Y)g(\alpha^{\#},FZ)\\&-g(X,Y)g(\beta^{\#},FZ).
\end{align*}
\end{lemma}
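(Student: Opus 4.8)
The plan is to evaluate the scalar $g(\nabla_XY,Z)$ by transferring the complex structure $J$ across the ambient Levi-Civita connection $\tilde\nabla$ twice, collecting tangential and normal parts through the Gauss--Weingarten formulas \eqref{2.3}--\eqref{2.5} at each stage. Since $h(X,Y)$ is normal while $Z$ is tangent, $g(\nabla_XY,Z)=g(\tilde\nabla_XY,Z)$; I abbreviate this common value by $Q$. Because $g$ is Hermitian, $Q=g(J\tilde\nabla_XY,JZ)$, and I would expand $J\tilde\nabla_XY=\tilde\nabla_X(JY)-(\tilde\nabla_XJ)Y$. As $X,Y\in\Gamma(\mathfrak{D})$ lie in the holomorphic distribution, $JY$ is tangent (indeed $FY=0$), so the Gauss formula applies to $\tilde\nabla_X(JY)$; splitting $JZ=PZ+FZ$ and invoking \eqref{2.5} then isolates the term $g(A_{FZ}JY,X)$ together with a purely tangential remainder $g(\nabla_X(JY),PZ)$ and a term $-g((\tilde\nabla_XJ)Y,JZ)$ governed by the $LCK$ identity \eqref{2.1}.

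The heart of the argument is to process the remainder $g(\nabla_X(JY),PZ)$ by the same device a second time. Writing it as $g(J\tilde\nabla_X(JY),JPZ)$ and using $J(JY)=-Y$ reproduces $\tilde\nabla_XY$, while $JPZ=P^2Z+FPZ=-\cos^2\theta\,Z+FPZ$ by the slant identity \eqref{2.8} on $\mathfrak{D}^\theta$ (here one first checks the standard fact that $\mathfrak{D}^\theta$ is $P$-invariant, so that $P^2Z=-\cos^2\theta\,Z$ is meaningful). This feeds back a multiple $\cos^2\theta\,Q$ of $Q$ itself, contributes the shape-operator term $-g(A_{FPZ}Y,X)$ via \eqref{2.5}, and leaves two further $(\tilde\nabla J)$-terms. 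Transposing $\cos^2\theta\,Q$ to the left collapses the coefficient of $Q$ to $\sin^2\theta$, yielding
\[
\sin^2\theta\,Q=g(A_{FZ}JY-A_{FPZ}Y,X)+\mathcal{R},
\]
where $\mathcal{R}=\cos^2\theta\,g((\tilde\nabla_XJ)(JY),Z)-g((\tilde\nabla_XJ)(JY),FPZ)-g((\tilde\nabla_XJ)Y,JZ)$.

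The main obstacle, and the only place where care is required, is to show that $\mathcal{R}$ reduces exactly to $-g(JX,Y)g(\alpha^{\#},FZ)-g(X,Y)g(\beta^{\#},FZ)$. I would expand each bracket through \eqref{2.1}, using the orthogonality $g(X,Z)=g(JX,Z)=g(X,PZ)=0$ coming from $TM=\mathfrak{D}\oplus\mathfrak{D}^\theta$ and the $J$-invariance of $\mathfrak{D}$ to annihilate most terms immediately. The surviving pieces carry the tangential inner products $g(\alpha^{\#},Z)$, $g(\alpha^{\#},PZ)$, $g(\beta^{\#},PZ)$, $g(\alpha^{\#},FPZ)$, $g(\beta^{\#},FPZ)$, all of which must cancel. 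To achieve this I would exploit the relation $\beta^{\#}=J\alpha^{\#}$ (immediate from $\beta(X)=-\alpha(JX)$) to convert every $\beta^{\#}$-inner product into an $\alpha^{\#}$-one, and then apply the slant relations \eqref{2.11}, namely $tFW=-\sin^2\theta\,W$ and $fFW=-FPW$ on $\mathfrak{D}^\theta$, to rewrite $g(\alpha^{\#},FPZ)$ and $g(\beta^{\#},FPZ)$. Collecting the coefficients of $g(X,Y)$ and of $g(JX,Y)$ separately, a short computation shows that each coefficient vanishes identically, leaving precisely the two claimed normal-component terms and completing the proof.
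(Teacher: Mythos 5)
Your proposal is correct, and it follows the same basic strategy as the paper: use the Hermitian property to write $g(\nabla_XY,Z)=g(J\tilde\nabla_XY,JZ)$, transfer $J$ via the Leibniz rule, and let $P^2=-\cos^2\theta\, I$ on $\mathfrak{D}^\theta$ feed back a $\cos^2\theta$ multiple of the unknown so that the coefficient collapses to $\sin^2\theta$. The difference lies in how the tangential component is processed. The paper splits $JZ=PZ+FZ$ \emph{first}, applies the covariant-derivative formula for $J$ only against $FZ$, and handles the $PZ$-part by moving $J$ directly onto $PZ$, i.e. $g(J\tilde\nabla_XY,PZ)=-g(\tilde\nabla_XY,P^2Z)-g(\tilde\nabla_XY,FPZ)$; as a result only one $(\tilde\nabla_XJ)Y$ term appears, it is paired with the normal vector $FZ$, and the two Lee-form terms drop out of \eqref{2.1} immediately with no cancellation to check (the terms in $X$ and $JX$ die because they are tangent and $FZ$ is normal). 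Your version applies the Leibniz rule to the full product and then a second time inside the remainder $g(\tilde\nabla_X(JY),PZ)$, which produces the three-term expression $\mathcal{R}$; the price is that you must verify the vanishing of the coefficients of $g(X,Y)$ and $g(JX,Y)$ involving $g(\alpha^{\#},Z)$, $g(\alpha^{\#},PZ)$, $g(\alpha^{\#},FPZ)$, etc. I checked that this cancellation does go through exactly as you describe, using $\beta^{\#}=J\alpha^{\#}$ together with $JPZ=-\cos^2\theta\,Z+FPZ$ and $JFPZ=-\sin^2\theta\,PZ+\cos^2\theta\,FZ$ from \eqref{2.11}, so your argument is complete; it is simply more laborious at that step than the paper's, which never encounters these interior products. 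A minor bonus of your route is that by substituting $P^2Z=-\cos^2\theta\,Z$ algebraically inside $JPZ$ you never differentiate $\cos^2\theta$, so the $\sin2\theta\,X(\theta)g(Y,Z)$ term that the paper has to discard by orthogonality never arises.
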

\begin{proof}
For any $X,Y\in\Gamma (\mathfrak{D})$ and $Z\in\Gamma (\mathfrak{D}^\theta)$, we have
\begin{align*}
g(\nabla_XY, Z)=g(J\tilde\nabla_XY, JZ)=g(J\tilde\nabla_XY, FZ)+g(J\tilde\nabla_XY, PZ).
\end{align*}
From the covariant derivative formula of $J$, we derive
\begin{align*}
g(\nabla_XY, Z)=g(\tilde\nabla_XJY, FZ)-g((\tilde\nabla_XJ)Y, FZ)-g(\tilde\nabla_XY, P^{2}Z)-g(\tilde\nabla_XY, FPZ).
\end{align*}
Then, using (\ref{2.1}), (\ref{2.5}) and (\ref{2.8}), we arrive at
\begin{align*}
g(\nabla_XY, Z)=\,&g(A_{FZ}JY, X)-g(JX,Y)g(\lambda, FZ)-g(X,Y)g(J\lambda,FZ)\notag\\
&-g(Y,\tilde\nabla_X\cos ^{2}\theta Z)-g(A_{FPZ}Y, X).
\end{align*}
Since $M$ is a proper pointwise semi-slant submanifold ,we get
\begin{align*}
g(\nabla_XY, Z)=\,&g(A_{FZ}JY-A_{FPZ}Y, X)-g(JX,Y)g(\lambda, FZ)-g(X,Y)g(J\lambda,FZ)
\\&-\cos ^{2}\theta g(\tilde\nabla_XZ,Y)+\sin2\theta X(\theta)g(Y,Z).
\end{align*}
Thus, the lemma follows from above relations by using the orthogonality of the two distributions. 
\end{proof}

If ${\mathfrak{D}}$ is a totally geodesic distribution in $M$, then $g(\nabla_XY,Z)=0$ for any $X,Y\in\Gamma (\mathfrak{D})$ and $Z\in\Gamma (\mathfrak{D}^\theta)$. Hence, Lemma \ref{L3.1} implies the following.

\begin{lemma}\label{L3.2}    
Let $M$ be a proper pointwise semi-slant submanifold of an $LCK$-manifold  $\tilde M$. Then  the distribution ${\mathfrak{D}}$ defines a totally geodesic foliation if and only if
\begin{align*}
g(A_{FZ}J X-A_{FPZ}X, Y)=g(X,Y)g(\beta^{\#},FZ)-g(JX,Y)g(\alpha^{\#},FZ),
\end{align*}
for any $X,Y\in\Gamma (\mathfrak{D})$ and $Z\in\Gamma (\mathfrak{D}^\theta)$.
\end{lemma}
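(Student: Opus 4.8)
The plan is to read this characterization off directly from Lemma~\ref{L3.1}, so the argument is a rearrangement rather than a fresh computation. First I would recall what it means for $\mathfrak{D}$ to define a totally geodesic foliation: the leaves of $\mathfrak{D}$ are totally geodesic in $M$ precisely when $\nabla_XY\in\Gamma(\mathfrak{D})$ for all $X,Y\in\Gamma(\mathfrak{D})$. Since the tangent bundle splits orthogonally as $TM=\mathfrak{D}\oplus\mathfrak{D}^\theta$, this is equivalent to the scalar condition $g(\nabla_XY,Z)=0$ for all $X,Y\in\Gamma(\mathfrak{D})$ and all $Z\in\Gamma(\mathfrak{D}^\theta)$.

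Next I would feed this into the identity of Lemma~\ref{L3.1}. As $M$ is a \emph{proper} pointwise semi-slant submanifold, the slant function of $\mathfrak{D}^\theta$ satisfies $\sin^2\theta\neq 0$, so the factor $\sin^2\theta$ on the left-hand side may be cancelled. Consequently $g(\nabla_XY,Z)=0$ holds for all admissible $X,Y,Z$ if and only if the right-hand side of Lemma~\ref{L3.1} vanishes identically, that is,
\begin{align*}
g(A_{FZ}JY-A_{FPZ}Y,X)=g(JX,Y)\,g(\alpha^{\#},FZ)+g(X,Y)\,g(\beta^{\#},FZ)
\end{align*}
for all $X,Y\in\Gamma(\mathfrak{D})$ and $Z\in\Gamma(\mathfrak{D}^\theta)$.

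Finally I would put this into the asserted form. Because the condition must hold for all $X,Y\in\Gamma(\mathfrak{D})$, I may interchange the names of $X$ and $Y$; combining this with the symmetry $g(X,Y)=g(Y,X)$ of the metric and the skew-symmetry of $J$ relative to $g$, namely $g(JY,X)=-g(JX,Y)$, the $\alpha^{\#}$-term picks up a sign while the $\beta^{\#}$-term is unchanged. This produces exactly
\begin{align*}
g(A_{FZ}JX-A_{FPZ}X,Y)=g(X,Y)\,g(\beta^{\#},FZ)-g(JX,Y)\,g(\alpha^{\#},FZ),
\end{align*}
which is the stated equivalence.

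I do not anticipate a genuine obstacle: the result is immediate once Lemma~\ref{L3.1} is available. The two points needing care are the cancellation of $\sin^2\theta$, which is where the properness of the slant distribution enters (so that $\mathfrak{D}^\theta$ is neither holomorphic nor totally real at any point), and the sign bookkeeping in the $X\leftrightarrow Y$ relabeling, where $g(JY,X)=-g(JX,Y)$ must be invoked to obtain the correct signs on the two Lee-form terms.
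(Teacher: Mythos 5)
Your proposal is correct and follows exactly the route the paper takes: the paper's own proof consists of observing that a totally geodesic foliation of $\mathfrak{D}$ is equivalent to $g(\nabla_XY,Z)=0$ and then invoking Lemma~\ref{L3.1}, leaving the cancellation of $\sin^2\theta$ and the $X\leftrightarrow Y$ relabeling (with $g(JY,X)=-g(JX,Y)$) implicit, which you have carried out correctly and explicitly.
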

\begin{corollary}\label{CL1} 
Let $M$ be a proper pointwise semi-slant submanifold of an $LCK$-manifold  $\tilde M$. Then  the distribution ${\mathfrak{D}}$ defines a totally geodesic foliation  if and only if
\label{2.14}
$$A_{FZ}J X-A_{FPZ}X =g(\beta^{\#},FZ)X-g(\alpha^{\#},FZ)JX,$$
for any $X\in\Gamma (\mathfrak{D})$ and $Z\in\Gamma (\mathfrak{D}^\theta)$.
\end{corollary}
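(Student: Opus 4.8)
The plan is to upgrade the scalar identity of Lemma~\ref{L3.2} to the stated pointwise (operator) identity by a non-degeneracy argument, the only genuine content being that the vectors involved lie in the invariant distribution $\mathfrak{D}$. The easy direction is immediate: if $A_{FZ}JX-A_{FPZ}X=g(\beta^{\#},FZ)X-g(\alpha^{\#},FZ)JX$ holds, then taking the inner product of both sides with an arbitrary $Y\in\Gamma(\mathfrak{D})$ and using $g(X,Y)g(\beta^{\#},FZ)-g(JX,Y)g(\alpha^{\#},FZ)=g\big(g(\beta^{\#},FZ)X-g(\alpha^{\#},FZ)JX,\,Y\big)$ reproduces exactly the scalar condition of Lemma~\ref{L3.2}; hence $\mathfrak{D}$ defines a totally geodesic foliation.

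For the converse I would begin from Lemma~\ref{L3.2}, writing its conclusion as $g(A_{FZ}JX-A_{FPZ}X,\,Y)=g(V,Y)$ for all $Y\in\Gamma(\mathfrak{D})$, where $V:=g(\beta^{\#},FZ)X-g(\alpha^{\#},FZ)JX$. Since $\mathfrak{D}$ is $J$-invariant, both $X$ and $JX$ lie in $\Gamma(\mathfrak{D})$, so $V\in\Gamma(\mathfrak{D})$. If one can show in addition that $A_{FZ}JX-A_{FPZ}X\in\Gamma(\mathfrak{D})$, then the vector $A_{FZ}JX-A_{FPZ}X-V$ lies in $\mathfrak{D}$ and is $g$-orthogonal to every $Y\in\Gamma(\mathfrak{D})$; as the induced metric is positive definite on $\mathfrak{D}$, this vector must vanish, which is precisely the asserted operator identity.

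The main obstacle is therefore to verify that $A_{FZ}JX-A_{FPZ}X$ has no component along the slant distribution $\mathfrak{D}^{\theta}$. I would test it against an arbitrary $W\in\Gamma(\mathfrak{D}^{\theta})$ and compute $g(A_{FZ}JX-A_{FPZ}X,\,W)=g(h(JX,W),FZ)-g(h(X,W),FPZ)$, rewriting the first term as $g(\tilde\nabla_W JX,FZ)$, expanding $\tilde\nabla_W JX=(\tilde\nabla_W J)X+J\tilde\nabla_W X$, and inserting (\ref{2.1}) together with the slant relations (\ref{2.10}) and (\ref{2.11}); here one uses $JFZ=-\sin^2\theta\,Z-FPZ$. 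In this computation the two shape-operator contributions along $FPZ$ cancel, and what survives is the residual term $\sin^2\theta\big(g(\nabla_W X,Z)-g(\alpha^{\#},X)\,g(W,Z)\big)$. Showing that this residual vanishes — i.e. that the mixed covariant-derivative term $g(\nabla_W X,Z)$ is pinned down by the Lee form — is the delicate point; I expect it to require a companion computation in the spirit of Lemma~\ref{L3.1} (now with one argument in $\mathfrak{D}$ and the others in $\mathfrak{D}^{\theta}$) rather than the totally geodesic hypothesis alone. Once this $\mathfrak{D}^{\theta}$-orthogonality is established, the operator identity follows from Lemma~\ref{L3.2} via the non-degeneracy argument above.
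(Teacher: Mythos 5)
Your easy direction is fine, and your diagnosis of the hard direction is exactly right: the only issue in upgrading Lemma \ref{L3.2} to the displayed operator identity is whether $A_{FZ}JX-A_{FPZ}X$ has a component along $\mathfrak{D}^{\theta}$. Your computation of that component is also correct: expanding $g(h(JX,W),FZ)-g(h(X,W),FPZ)$ via $JFZ=-\sin^2\theta\,Z-FPZ$ and \eqref{2.1} does yield
\begin{equation*}
g\bigl(A_{FZ}JX-A_{FPZ}X,\,W\bigr)=\sin^2\theta\,\bigl(g(\nabla_WX,Z)-g(\alpha^{\#},X)\,g(W,Z)\bigr).
\end{equation*}
But the residual you are left with cannot be removed by the totally geodesic hypothesis, and no ``companion computation'' will do it: that hypothesis controls $\nabla_XY$ for $X,Y\in\Gamma(\mathfrak{D})$ and says nothing about $g(\nabla_WX,Z)=-g(X,\nabla_WZ)$, which is extrinsic data of the slant distribution. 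Concretely, on a warped product $N^T\times_fN^{\theta}$ with Lee field tangent --- where $\mathfrak{D}$ is automatically a totally geodesic foliation and the right-hand side of the corollary vanishes --- Theorem \ref{WT2}(ii) gives $g(A_{FZ}JX-A_{FPZ}X,W)=\sin^2\theta\,(X(\ln f)-\alpha(X))\,g(Z,W)$, which by Theorem \ref{TC1} equals $\sin^2\theta\tan\theta\,X(\theta)\,g(Z,W)$ and vanishes only when $\theta$ is constant along $\mathfrak{D}$. So the ``only if'' direction of the literal vector identity in $TM$ does not follow from the hypothesis; it would force the extra condition $X(\theta)=0$ for all $X\in\Gamma(\mathfrak{D})$.

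The resolution is that the paper supplies no proof at all here: the corollary is presented as an immediate restatement of Lemma \ref{L3.2}, so the displayed equation is to be read as an equality of $\mathfrak{D}$-components (equivalently, as holding after pairing with an arbitrary $Y\in\Gamma(\mathfrak{D})$, which is literally the content of Lemma \ref{L3.2}). Under that reading your first two paragraphs already constitute a complete argument --- the right-hand side lies in $\Gamma(\mathfrak{D})$ because $\mathfrak{D}$ is $J$-invariant, and nondegeneracy of $g$ restricted to $\mathfrak{D}$ finishes it --- and your third paragraph is unnecessary. If one insists on the identity as an equation in $TM$, the statement should either carry an additional hypothesis guaranteeing the vanishing of the $\mathfrak{D}^{\theta}$-component, or be rewritten with the left-hand side projected onto $\mathfrak{D}$. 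Your honest flagging of the delicate point is therefore well placed: the obstruction you found is real, and it sits in the statement rather than in your argument.
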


For leaves of the pointwise slant distribution $\mathfrak{D}^\theta$, we have the following result.
 
\begin{lemma}\label{L3.3}
 Let $M$ be a pointwise semi-slant submanifold  of an $LCK$-manifold $\tilde M$ with proper pointwise slant distribution $\mathfrak{D}^\theta$. Then we have
\begin{align*}
g(\nabla_ZW,X)=\csc^2\theta g(A_{FPW}X-A_{FW}JX,Z)- g(Z,W)g(\alpha^{\#},X),
\end{align*} 
for any $X\in\Gamma (\mathfrak{D})$ and $Z, W\in\Gamma (\mathfrak{D}^\theta)$.
\end{lemma}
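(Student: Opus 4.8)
The plan is to mirror the proof of Lemma \ref{L3.1} with the two distributions interchanged. I would start from the tangential identity $g(\nabla_Z W, X) = g(\tilde\nabla_Z W, X)$ (the normal part $h(Z,W)$ drops out against the tangent field $X$) and then use that $J$ preserves $g$ to write $g(\tilde\nabla_Z W, X) = g(J\tilde\nabla_Z W, JX)$, noting that $JX \in \mathfrak{D}$ is tangent since $\mathfrak{D}$ is $J$-invariant. Next I would apply the covariant-derivative formula to replace $J\tilde\nabla_Z W$ by $\tilde\nabla_Z(JW) - (\tilde\nabla_Z J)W$ and decompose $JW = PW + FW$ via \eqref{2.6}. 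The normal part $FW$ is the easy one: the Weingarten formula \eqref{2.4} together with the symmetry \eqref{2.5} turns $g(\tilde\nabla_Z FW, JX)$ directly into $-g(A_{FW}JX, Z)$, producing one of the two shape-operator terms.

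The tangential part $g(\tilde\nabla_Z PW, JX)$ is where the work lies. Since $PW \in \mathfrak{D}^\theta$ and $JX \in \mathfrak{D}$ are orthogonal, I would differentiate the vanishing inner product $g(PW, JX) = 0$ to move the derivative onto $JX$, split $\tilde\nabla_Z(JX) = (\tilde\nabla_Z J)X + J\tilde\nabla_Z X$, and push $J$ back across using the skew-symmetry of $J$. This generates $J(PW) = P^2 W + FPW = -\cos^2\theta\, W + FPW$ through the slant relation \eqref{2.8}; the $FPW$ piece contributes the second shape term $g(A_{FPW}X, Z)$ via \eqref{2.4}--\eqref{2.5}, while the $-\cos^2\theta\, W$ piece reproduces $\cos^2\theta\, g(\nabla_Z W, X)$ after one more differentiation of $g(W,X)=0$.

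The remaining ingredients are the two $(\tilde\nabla_Z J)$ contributions, which I would expand with the Lee-form identity \eqref{2.2} (recall $\lambda = \alpha^{\#}$). Using the orthogonality of $\mathfrak{D}$ and $\mathfrak{D}^\theta$, the invariance $J\mathfrak{D} = \mathfrak{D}$, the slant formula \eqref{2.9}, and the skew-symmetry of $P$ (so $g(PZ, W) = -g(PW, Z)$), almost every term collapses; in particular the two terms carrying the factor $g(\lambda, JX)$ cancel against one another, so no $\beta^{\#}$-contribution survives, and the surviving Lee-form terms combine through $\cos^2\theta - 1 = -\sin^2\theta$ into the single term $-\sin^2\theta\, g(Z, W)\, g(\alpha^{\#}, X)$.

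The main obstacle --- and the conceptual point of the lemma --- is that the quantity $g(\nabla_Z W, X)$ being computed reappears on the right-hand side with coefficient $\cos^2\theta$. The argument is therefore not a straight evaluation but an equation to be solved: collecting that self-referential term on the left yields a factor $1 - \cos^2\theta = \sin^2\theta$, and dividing by it --- legitimate because $\mathfrak{D}^\theta$ is proper, so $\sin^2\theta \neq 0$ --- is precisely what produces the coefficient $\csc^2\theta$ in front of $g(A_{FPW}X - A_{FW}JX, Z)$. The only real care needed is the bookkeeping ensuring that the spurious $g(PW, Z)\,g(\lambda, JX)$ terms cancel and that the Lee-form terms assemble correctly into the stated $-g(Z, W)\,g(\alpha^{\#}, X)$.
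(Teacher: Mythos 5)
Your proposal is correct and follows essentially the same route as the paper's proof: rewrite $g(\nabla_Z W,X)$ as $g(J\tilde\nabla_Z W, JX)$, apply the covariant-derivative formula for $J$ (via \eqref{2.2}) twice together with the splitting $JW=PW+FW$ and the slant relation $P^2=-\cos^2\theta\,I$, and then solve the resulting self-referential equation by dividing the $(1-\cos^2\theta)$-coefficient through by $\sin^2\theta$. The only (immaterial) difference is that for the tangential term you move the derivative onto $JX$ by differentiating $g(PW,JX)=0$, whereas the paper applies the covariant-derivative formula directly to $J\tilde\nabla_Z PW$; both variants produce the same shape-operator terms, the same cancellation of the $g(\lambda,JX)$ contributions, and the same Lee-form term.
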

\begin{proof} For any for any $X\in\Gamma (\mathfrak{D})$ and $Z,W\in\Gamma (\mathfrak{D}^\theta)$, we find
\begin{align*}
g(\nabla_ZW, X)=g(J\tilde\nabla_Z W,JX)=g(\tilde\nabla_Z J W,J X)-g((\tilde\nabla_ZJ )W,JX).
\end{align*}
Using (\ref{2.2}), we get
\begin{align*}
g(\nabla_ZW, X)=&g(\tilde\nabla_ZPW,J X)+g(\tilde\nabla_ZFW,J X)-g(JZ,W)g(\lambda,JX)\\
&-g(Z,W)g(J\lambda,JX).
\end{align*}
Then, we derive
\begin{align*}
g(\nabla_ZW, X)=&-g(J\tilde\nabla_ZPW, X)-g(A_{FW}Z,J X)-g(PZ,W)g(\lambda,JX)\\
&-g(Z,W)g(\lambda,X).
\end{align*}
From the definition of covariant derivative of $J$ and the symmetry of the shape operator, we obtain
\begin{align*}
g(\nabla_ZW, X)=\,&g((\tilde\nabla_ZJ)PW, X)-g(\tilde\nabla_ZJ PW, X)-g(A_{FW}J X, Z)\notag\\
&-g(PZ,W)g(\lambda,JX)-g(Z,W)g(\lambda,X).
\end{align*}
Again using (\ref{2.2}) and (\ref{2.6}), we derive
\begin{align*}
g(\nabla_ZW, X)=\,&g(PZ, PW)g(\lambda,X)+g(Z,PW)g(J\lambda,X)
-g(\tilde\nabla_ZP^2W, X)\notag\\
&-g(\tilde\nabla_ZFPW, X)-g(A_{FW}J X, Z)-g(PZ,W)g(\lambda,JX)\\
&-g(Z,W)g(\lambda,X).\notag
\end{align*}
From the relation (\ref{2.8}), we find
\begin{align*}
g(\nabla_ZW, X)=\,&\cos^2\theta g(\tilde\nabla_ZW, X)-\sin2\theta X(\theta) g(W,X)+g(A_{FPW}Z, X)\notag\\
&+\cos^2\theta g(Z, W)g(\lambda,X)-g(A_{FW}J X, Z)-g(Z,W)g(\lambda,X).\notag
\end{align*}
By the orthogonality of two distributions, we derive
\begin{align*}
g(\nabla_ZW,X)=\csc^2{\theta}[g(A_{FPW}X,Z)-g(A_{FW}J X,Z)]-g(Z,W)g(\lambda,X),
\end{align*}
which proves the lemma completely. 
\end{proof}
Lemma \ref{L3.3} implies the following result.

\begin{corollary} \label{CL2}Let $M$ be a proper pointwise semi-slant submanifold of an $LCK$-manifold  $\tilde M$. Then the slant distribution ${\mathfrak{D}}^\theta$ defines a totally geodesic foliation if and only if
\begin{equation*}\label{2.16}
g(A_{FPZ}X-A_{FZ}J X, W)=\sin^2{\theta}g(\alpha^{\#},X)g(Z,W)
\end{equation*}
for any $X\in\Gamma (\mathfrak{D})$ and $Z,W\in\Gamma (\mathfrak{D}^\theta)$.
\end{corollary}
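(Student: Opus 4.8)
The plan is to read the corollary off directly from Lemma \ref{L3.3} by converting the geometric condition into an algebraic one. First I would recall the standard fact that a distribution defines a totally geodesic foliation precisely when $\nabla_Z W$ remains a section of the distribution for all sections $Z,W$; for the slant distribution this condition simultaneously forces integrability (its antisymmetric part gives $[Z,W]\in\Gamma(\mathfrak{D}^\theta)$) and total geodesy of the leaves. Since $TM=\mathfrak{D}\oplus\mathfrak{D}^\theta$ is an orthogonal decomposition, this is equivalent to the single scalar condition $g(\nabla_Z W, X)=0$ for every $X\in\Gamma(\mathfrak{D})$ and every $Z,W\in\Gamma(\mathfrak{D}^\theta)$.

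Next I would substitute this into the formula of Lemma \ref{L3.3}. Setting its left-hand side to zero gives
\begin{align*}
\csc^2\theta\, g(A_{FPW}X-A_{FW}JX, Z)=g(Z,W)\,g(\alpha^{\#},X).
\end{align*}
Because $\mathfrak{D}^\theta$ is a proper pointwise slant distribution, its slant function satisfies $\sin\theta\neq 0$, so I may multiply through by $\sin^2\theta$ to clear the cosecant and obtain
\begin{align*}
g(A_{FPW}X-A_{FW}JX, Z)=\sin^2\theta\, g(\alpha^{\#},X)\,g(Z,W).
\end{align*}

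Finally, since the induced metric is symmetric, $g(Z,W)=g(W,Z)$, and the identity must hold for all $Z,W\in\Gamma(\mathfrak{D}^\theta)$, interchanging the labels $Z$ and $W$ yields exactly the asserted equality
\begin{align*}
g(A_{FPZ}X-A_{FZ}JX, W)=\sin^2\theta\, g(\alpha^{\#},X)\,g(Z,W).
\end{align*}
I expect no genuine computational obstacle here, as all the hard work was already done in Lemma \ref{L3.3}; the only points that need a little care are the equivalence between the geometric notion of a totally geodesic foliation and the vanishing of the $\mathfrak{D}$-component of $\nabla_Z W$, together with the use of properness to guarantee $\sin\theta\neq 0$ when clearing the cosecant. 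The relabeling $Z\leftrightarrow W$ is harmless precisely because $g(Z,W)$ is symmetric.
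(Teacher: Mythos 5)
Your proposal is correct and follows exactly the route the paper intends: the paper simply states that Corollary \ref{CL2} follows from Lemma \ref{L3.3}, and your argument fills in precisely the expected steps (totally geodesic foliation $\Leftrightarrow$ $g(\nabla_Z W,X)=0$, clear the $\csc^2\theta$ using properness, relabel $Z\leftrightarrow W$). No issues.
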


\begin{lemma}\label{L3.4} Let $M$ be a proper pointwise semi-slant submanifold of an $LCK$-manifold $\tilde M$. Then we have
$$\sin^2\theta\,g([Z, W], X)=g(A_{FZ}J X-A_{FPZ}X, W)-g(A_{FW}J X-A_{FPW}X, Z)$$
for any $X\in\Gamma (\mathfrak{D})$ and $Z,W\in\Gamma (\mathfrak{D}^\theta)$.
\end{lemma}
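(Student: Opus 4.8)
The plan is to compute the Lie bracket $[Z,W]$ using the torsion-freeness of the Levi-Civita connection and then apply Lemma \ref{L3.3} twice. Since $\nabla$ is torsion-free, $[Z,W]=\nabla_Z W-\nabla_W Z$, so for any $X\in\Gamma(\mathfrak{D})$ and $Z,W\in\Gamma(\mathfrak{D}^\theta)$ one has
\begin{align*}
g([Z,W],X)=g(\nabla_Z W,X)-g(\nabla_W Z,X).
\end{align*}

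The second step applies Lemma \ref{L3.3} to each term on the right. For the first term I use the lemma exactly as stated; for the second I interchange the roles of $Z$ and $W$, which gives
\begin{align*}
g(\nabla_W Z,X)=\csc^2\theta\,g(A_{FPZ}X-A_{FZ}JX,W)-g(W,Z)g(\alpha^{\#},X).
\end{align*}
Subtracting the two expressions, the key observation — and the only point requiring any attention — is that the Lee-form contributions $g(Z,W)g(\alpha^{\#},X)$ and $g(W,Z)g(\alpha^{\#},X)$ cancel by the symmetry of the metric $g$. This is precisely what makes the bracket identity cleaner than either individual connection formula: the Lee vector field drops out entirely, and the pointwise slant function survives only through the overall factor. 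After the cancellation I am left with
\begin{align*}
g([Z,W],X)=\csc^2\theta\,\big[g(A_{FPW}X-A_{FW}JX,Z)-g(A_{FPZ}X-A_{FZ}JX,W)\big].
\end{align*}

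Finally, I would multiply through by $\sin^2\theta$ and regroup the four shape-operator terms, pairing $A_{FZ}JX$ with $-A_{FPZ}X$ (both evaluated against $W$) and $A_{FW}JX$ with $-A_{FPW}X$ (both against $Z$); this yields the asserted identity. There is no real obstacle in this argument: once Lemma \ref{L3.3} is in hand, the entire proof is a direct consequence of the antisymmetry of the bracket together with the symmetry of $g$, which is what arranges the convenient cancellation of the Lee-form terms.
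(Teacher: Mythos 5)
Your argument is correct and is essentially the paper's own proof: both apply Lemma \ref{L3.3} twice (once with $Z$ and $W$ interchanged), observe that the Lee-form terms $g(Z,W)g(\alpha^{\#},X)$ cancel by symmetry of $g$, and subtract; whether one multiplies by $\sin^2\theta$ before or after subtracting is immaterial. The signs in your final regrouping also come out matching the stated identity, so nothing further is needed.
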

\begin{proof}  
From Lemma \ref{L3.3}, we have
\begin{equation}
\label{2.17}
\sin^2\theta\,g(\tilde\nabla_ZW, X)=g(A_{FPW}X-A_{FW}J X, Z)-\sin^2\theta\,g(\lambda,X)g(Z, W)
\end{equation}
for any $X\in\Gamma (\mathfrak{D})$ and $Z,W\in\Gamma (\mathfrak{D}^\theta)$. By interchanging $Z$ and $W$ in (\ref{2.17}), we find
\begin{equation}
\label{2.18}
\sin^2\theta\,g(\tilde\nabla_WZ, X)=g(A_{FPZ}X-A_{FZ}J X, W)-\sin^2\theta\,g(\lambda,X)g(Z, W).
\end{equation}
Thus, after subtracting (\ref{2.17}) from (\ref{2.18}), we get the required result.
\end{proof}

Now, we give the following integrability theorem.

\begin{theorem}
Let $M$ be a pointwise semi-slant submanifold of an $LCK$-manifold $(\tilde{M}, J, g)$. Then we have
\begin{enumerate}
\item [(i)]  The holomorphic distribution ${\mathfrak{D}}$ of $M$ is integrable if and only if
\begin{align*}
h(JY, X ) - h(JX, Y) =2 g(J X, Y )\alpha^{\#},\;\;\forall X,Y\in\Gamma (\mathfrak{D}).
\end{align*}
\item [(ii)]The pointwise slant distribution $\mathfrak{D}^{\theta}$ of $M$ is integrable if and only if
 \begin{align*}
g(A_{FW}JX-A_{FPW}X,Z)=g(A_{FZ}JX-A_{FPZ}X,W),\;\forall X\in\Gamma (\mathfrak{D}),\; Z,W\in\Gamma (\mathfrak{D}^{\theta}).
 \end{align*}
\end{enumerate}
\end{theorem}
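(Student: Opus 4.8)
The plan is to reduce each integrability statement to the vanishing of a single mixed bracket component. With the orthogonal splitting $TM = \mathfrak{D} \oplus \mathfrak{D}^\theta$ and the fact that the Lie bracket of tangent fields stays tangent to $M$, the distribution $\mathfrak{D}$ is integrable precisely when $g([X,Y],Z) = 0$ for all $X,Y \in \Gamma(\mathfrak{D})$ and $Z \in \Gamma(\mathfrak{D}^\theta)$, while $\mathfrak{D}^\theta$ is integrable precisely when $g([Z,W],X) = 0$ for all $Z,W \in \Gamma(\mathfrak{D}^\theta)$ and $X \in \Gamma(\mathfrak{D})$. Thus both parts become a matter of antisymmetrizing the one-sided formulas already established in Section \ref{Sec3}.

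I would dispatch part (ii) first, since it is essentially immediate. Lemma \ref{L3.4} gives $\sin^2\theta\,g([Z,W],X) = g(A_{FZ}JX - A_{FPZ}X, W) - g(A_{FW}JX - A_{FPW}X, Z)$. As the slant function is proper, $\sin^2\theta$ never vanishes, so the condition $g([Z,W],X) = 0$ for all admissible $X,Z,W$ is equivalent to $g(A_{FZ}JX - A_{FPZ}X, W) = g(A_{FW}JX - A_{FPW}X, Z)$, which is exactly the asserted criterion after a trivial rearrangement of the two sides.

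For part (i) I would run the analogous antisymmetrization on Lemma \ref{L3.1}. Applying that lemma to $g(\nabla_X Y, Z)$ and to $g(\nabla_Y X, Z)$ and subtracting expresses $\sin^2\theta\,g([X,Y],Z)$ as the difference of the two shape-operator brackets plus the Lee-form contributions; here the symmetric $\beta^{\#}$-term cancels outright, while $-[g(JX,Y) - g(JY,X)]\,g(\alpha^{\#},FZ)$ collapses to $-2g(JX,Y)\,g(\alpha^{\#},FZ)$ by the $J$-skew-symmetry $g(JY,X) = -g(JX,Y)$. The key simplification is to rewrite the shape operators through (\ref{2.5}), namely $g(A_{FZ}JY,X) = g(h(JY,X),FZ)$ and $g(A_{FPZ}Y,X) = g(h(X,Y),FPZ)$; then the two $FPZ$-contributions cancel by symmetry of $h$ and the survivors assemble into $\sin^2\theta\,g([X,Y],Z) = g\big(h(JY,X) - h(JX,Y) - 2g(JX,Y)\alpha^{\#},\,FZ\big)$. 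Dividing by $\sin^2\theta$ and letting $Z$ range over $\Gamma(\mathfrak{D}^\theta)$ yields the stated condition.

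The step I expect to be the main obstacle is the last one in part (i): the pairing identity only controls the $F\mathfrak{D}^\theta$-component of the vector $h(JY,X) - h(JX,Y) - 2g(JX,Y)\alpha^{\#}$, so passing from $g(\,\cdot\,,FZ) = 0$ for all $Z$ to the full vector equation stated in the theorem requires care about the invariant normal subbundle $\mu$ and any tangential part of $\alpha^{\#}$. I would either argue that the relevant combination already lies in $F\mathfrak{D}^\theta$, or read the conclusion as an equality of $F\mathfrak{D}^\theta$-components; everything else is the routine sign bookkeeping of the shape-operator-to-$h$ conversion.
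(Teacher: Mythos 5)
Your proposal is correct and follows essentially the same route as the paper: antisymmetrize Lemma \ref{L3.1} for (i) (the paper's proof does exactly this, arriving at $\sin^2\theta\,g([X,Y],Z)=g(h(JY,X)-h(JX,Y),FZ)-2g(JX,Y)g(\alpha^{\#},FZ)$ after the $A_{FPZ}$- and $\beta^{\#}$-terms cancel by symmetry), and use the antisymmetrized slant-leaf formula of Lemma \ref{L3.4} for (ii), which the paper dispatches with ``in a similar way.'' The subtlety you flag at the end of (i) is genuine but is equally present in the paper's own argument, which likewise only establishes the condition on the $F\mathfrak{D}^{\theta}$-components (pairing against $FZ$) before reading off the stated vector equation.
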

\begin{proof} Let $M$ be a proper pointwise semi-slant submanifold of an $LCK$-manifold. Then,
  by interchanging $X$ and $Y$  and using symmetry of $A$,  we find from  Lemma \ref{L3.1} that
\begin{align*}
\sin^2\theta g([X,Y],Z)=g(A_{FZ}JY,X)-g(A_{FZ}JX,Y)-2g(JX,Y)g(\lambda,FZ)
\end{align*}
for any $X\in\Gamma({\mathfrak{D}})$ and $Z\in\Gamma({\mathfrak{D}}^\theta)$. 
Thus the distribution $\mathfrak{D}$ is integrable if and only if $g([X,Y],Z) = 0$ for all  $X\in\Gamma({\mathfrak{D}})$ and $Z\in\Gamma({\mathfrak{D}}^\theta)$, i.e., 
\begin{align*}
g(h(JY,X),FZ)-g(h(JX,Y),FZ)=2g(JX,Y)g(\lambda,FZ).
\end{align*}
Hence, (i) follows from the last relation. In a similar way, we can prove (ii).   
\end{proof}


\section{Pointwise semi-slant warped products: $N^{T}\times_fN^{\theta}$}\label{Sec4}

Let $N_1$ and $N_2$ be two Riemannian manifolds with Riemannian metrics $g_1$ and $g_2$, respectively, and $f$ be a positive differential function on $N_1$. Consider the product manifold $N_1\times N_2$ with its natural projections $\pi_1:N_1\times N_2\rightarrow N_1$ and $\pi_2:N_1\times N_2\rightarrow N_2$. Then the {\it warped product manifold} $N_1\times _fN_2$ is the product manifold $N_1\times N_2$ equipped with the warped product metric $g$ defined by
\begin{align*}
g(X, Y)=g_1({\pi_1}_\star X, {\pi_1}_\star Y)+(f\circ\pi_1)^2 g_2({\pi_2}_\star X, {\pi_2}_\star Y)
\end{align*}
for  $X, Y\in\Gamma(TM)$, where ${\pi_{i}}_{\star}$ is the tangent map of $\pi_{i}$. The function $f$ is called the {\it warping function} on $M$. A warped product manifold $N_1\times _fN_2$ is called {\it{trivial}} if its warping function $f$ is constant.

The following lemma is well-known.

\begin{lemma}\label{WL1}\cite{Bi} Let $M=N_1\times{_{f} N_2}$ be a warped product manifold with the warping function $f$, then for any $X, Y\in T(N_1)$ and $Z, W\in
T(N_2)$, we have
\begin{enumerate}
\item [{(i)}] $\nabla_XY\in T(N_1)$,
\item [{(ii)}] $\nabla_XZ=\nabla_ZX=X(\ln f)Z$,
\item [{(iii)}] $\nabla_ZW=\nabla_Z^{N_2}W-g(Z,W)\vec\nabla\ln f$,
\end{enumerate}
where $\nabla$ and $\nabla^{N_2}$ denote the Levi-Civita
connections on $M$ and $N_2$, respectively and $\vec\nabla\ln f$ is the
gradient of the function $\ln f$ defined as $g(\vec\nabla f, X)=X(f)$.
\end{lemma}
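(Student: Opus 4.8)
The final statement is Lemma~\ref{WL1}, the standard Bishop--O'Neill formulas for the Levi-Civita connection of a warped product $M=N_1\times_f N_2$. Since this is a classical result (the paper itself cites \cite{Bi}), the plan is to derive each of the three formulas directly from the Koszul formula together with the explicit form of the warped metric $g(X,Y)=g_1(X,Y)+f^2 g_2(X,Y)$, exploiting that $f$ depends only on the base $N_1$ and that the two distributions $TN_1$ and $TN_2$ are $g$-orthogonal.

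First I would fix the setup: extend vector fields so that horizontal fields (lifts from $N_1$) and vertical fields (lifts from $N_2$) have vanishing mutual Lie brackets, $[X,Z]=0$ for $X\in T(N_1)$, $Z\in T(N_2)$, and recall that $X(f^2)=2f\,X(f)$ while $Z(f^2)=0$ since $f\circ\pi_1$ is constant along the fibers. For part~(i), I would show $\nabla_X Y$ has no vertical component: pairing with an arbitrary vertical $W$ via the Koszul formula, every surviving term either carries a factor $W(\text{horizontal inner product})=0$ or a bracket $[X,W]=0$, so $g(\nabla_X Y, W)=0$ and hence $\nabla_X Y\in T(N_1)$; that it agrees with $\nabla^{N_1}_X Y$ follows because on horizontal fields $g$ restricts to $g_1$. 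For part~(ii), the symmetry $\nabla_X Z=\nabla_Z X$ is immediate from $[X,Z]=0$; to evaluate it I would apply Koszul to $g(\nabla_X Z, W)$ for vertical $W$, where the only nonzero contribution comes from the $X$-derivative of $f^2 g_2(Z,W)$, yielding $X(f)\,f\,g_2(Z,W)=X(\ln f)\,g(Z,W)$, so $\nabla_X Z=X(\ln f)Z$.

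For part~(iii), I would again split into a vertical and a horizontal component. Pairing $\nabla_Z W$ with a vertical field and factoring out $f^2$ reduces the computation to the intrinsic Koszul formula on $(N_2,g_2)$, recovering $\nabla^{N_2}_Z W$. The horizontal component is the genuinely new term: pairing $\nabla_Z W$ with a horizontal $X$ and using Koszul, the only surviving piece is $-\tfrac12 X(f^2)\,g_2(Z,W)=-f\,X(f)\,g_2(Z,W)$, which equals $-g(Z,W)\,X(\ln f)=-g(Z,W)\,g(\vec\nabla\ln f,X)$; reassembling gives the stated $\nabla_Z W=\nabla^{N_2}_Z W-g(Z,W)\vec\nabla\ln f$.

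The computations are entirely routine Koszul-formula bookkeeping, so there is no serious obstacle; the one point requiring care is tracking which terms vanish and why, namely distinguishing between derivatives of $f^2$ taken in horizontal versus vertical directions and remembering that $g_2$-inner products are taken \emph{before} multiplication by the warping factor, so that the $f^2$ must be differentiated by the Leibniz rule. The cleanest presentation is to record the general Koszul formula once and then specialize it to the three combinations of horizontal/vertical arguments, reading off each identity as the nonvanishing residue.
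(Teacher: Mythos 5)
Your proposal is correct: the Koszul-formula computation you outline, specialized to the three horizontal/vertical combinations with $[X,Z]=0$ for lifted fields and $Z(f)=0$, is the standard derivation of these formulas. The paper itself offers no proof---it states the lemma as well known and cites Bishop--O'Neill---so there is nothing to compare against beyond noting that your argument is essentially the classical one from that reference.
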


\begin{remark}
It is important to note that for a warped product $N_1\times_fN_2$, $N_1$ is totally geodesic and $N_2$ is totally umbilical in $M$ (c.f., \cite{Bi,C01a}).
\end{remark}

In this  section,  we study pointwise semi-slant warped products $M=N^T\times_fN^\theta$ in an $LCK$-manifold $\tilde M$ under the assumption that the Lee vector field $\alpha^{\#}$ is tangent to $M$. Clearly, $CR$-warped products and semi-slant warped product submanifolds are special cases of pointwise semi-slant warped product submanifolds $N^T\times_fN^{\theta}$ such that the slant function $\theta$ satisfies $\theta=\frac{\pi}{2}$ and $\theta={\rm constant}$, respectively.
For simplicity, we denote the tangent spaces of $N^T$ and $N^\theta$ by ${\mathfrak{D}}$ and ${\mathfrak{D}}^\theta$, respectively.

\begin{proposition}\label{WP1}
For a proper pointwise semi-slant warped product  $N^{T}\times {_{f}}N^{\theta }$ in  an $LCK$-manifold   $\tilde{M}$, the Lee vector field $\alpha^{\#}$ is orthogonal to ${\mathfrak{D}}^\theta$.
\end{proposition}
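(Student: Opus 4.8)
The plan is to reduce the whole statement to the single scalar identity
\begin{align}\label{planeq}
g(h(X,JX),FZ)=\|X\|^2\,g(\alpha^{\#},Z),
\end{align}
valid for all $X\in\Gamma(\mathfrak{D})$ and $Z\in\Gamma(\mathfrak{D}^\theta)$. Once \eqref{planeq} is in hand the proposition follows by a parity argument: replacing $X$ by $JX$ and using $J(JX)=-X$ gives $h(JX,J(JX))=-h(X,JX)$, so the left-hand side of \eqref{planeq} changes sign, whereas the right-hand side is unchanged because $\|JX\|=\|X\|$. Comparing the two instances of \eqref{planeq} forces $2\|X\|^2 g(\alpha^{\#},Z)=0$, and since $M$ is proper we have $\dim N^T\neq 0$, so we may choose $X\neq 0$ and conclude $g(\alpha^{\#},Z)=0$ for every $Z\in\Gamma(\mathfrak{D}^\theta)$, which is exactly the orthogonality claimed.

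To establish \eqref{planeq} I would begin with $g(h(X,JX),FZ)=g(\tilde\nabla_X JX,FZ)$, which holds because the tangential part $\nabla_X JX$ of $\tilde\nabla_X JX$ is orthogonal to the normal field $FZ$ by \eqref{2.3}. Writing $FZ=JZ-PZ$ from \eqref{2.6}, the $PZ$-term drops out: since $\mathfrak{D}$ is holomorphic and its leaves $N^T$ are totally geodesic in $M$ by Lemma \ref{WL1}(i), the vector $\tilde\nabla_X JX=\nabla_X JX+h(X,JX)$ has its tangential part in $\mathfrak{D}$ and its normal part in $\Gamma(T^\perp M)$, both orthogonal to $PZ\in\Gamma(\mathfrak{D}^\theta)$. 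This leaves $g(h(X,JX),FZ)=g(\tilde\nabla_X JX,JZ)=-g(J\tilde\nabla_X JX,Z)$. Expanding $J\tilde\nabla_X JX=\tilde\nabla_X(J\,JX)-(\tilde\nabla_X J)JX=-\tilde\nabla_X X-(\tilde\nabla_X J)JX$ and noting $g(\tilde\nabla_X X,Z)=0$ (again by total geodesy of the leaves), the identity collapses to $g(h(X,JX),FZ)=g((\tilde\nabla_X J)JX,Z)$. Substituting the $LCK$ structure equation \eqref{2.2} with $Y=JX$ and using the orthogonality $\mathfrak{D}\perp\mathfrak{D}^\theta$—which annihilates every term except $g(JX,JX)\,g(\alpha^{\#},Z)$—yields precisely \eqref{planeq}.

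I expect the main obstacle to be conceptual rather than computational: the ``obvious'' route of feeding the warped-product formulas of Lemma \ref{WL1} into Lemma \ref{L3.1} or Lemma \ref{L3.3} only reproduces identities that are automatically self-consistent and never isolate $g(\alpha^{\#},Z)$. The decisive move is to test the second fundamental form on the pair $(X,JX)$ with \emph{both} slots inside the holomorphic distribution $\mathfrak{D}$, rather than on a mixed pair, and then to exploit the antisymmetry $h(JX,J(JX))=-h(X,JX)$; this is what breaks the symmetry and produces the factor $2$. The only genuinely delicate part of the derivation of \eqref{planeq} is the careful bookkeeping of which inner products vanish, where the totally geodesic character of $\mathfrak{D}$, the normality of both $h$ and $FZ$, and the decomposition $TM=\mathfrak{D}\oplus\mathfrak{D}^\theta$ of Definition \ref{D2} must each be invoked at the right moment.
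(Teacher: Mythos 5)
Your proof is correct and follows essentially the same route as the paper: both compute $g(h(\cdot,\cdot),FZ)$ on arguments from $\mathfrak{D}$ via the covariant derivative of $J$ and the LCK structure equation \eqref{2.2}, the paper obtaining $g(h(X,Y),FZ)=g(X,Y)g(J\lambda,Z)+g(JX,Y)g(\lambda,Z)$ for all $X,Y\in\Gamma(\mathfrak{D})$ and then invoking the symmetry of $h$ to kill the antisymmetric term $g(JX,Y)g(\lambda,Z)$. Your substitution $X\mapsto JX$ applied to the special instance $Y=JX$ is exactly that symmetry argument in disguise, so the two proofs coincide in substance.
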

 
\begin{proof}
For any  $X\in\Gamma({\mathfrak{D}})$ and $Z\in\Gamma({\mathfrak{D}}^\theta)$, we have
\begin{align*}
g(h(X,Y), FZ)&=g(h(X, Y), JZ)=-g(J\tilde\nabla_XY, Z)-g(\nabla_XY, JZ)
\end{align*}
Using the definition of covariant derivative of $J$ and Lemma \ref{WL1}(i), we find
\begin{align}\label{S1}
g(h(X,Y), FZ)&=g((\tilde\nabla_XJ)Y),Z)-g(\tilde\nabla_XJY,Z)\notag\\
&=g(X,Y)g(J\lambda,Z)+g(JX,Y)g(\lambda,Z).
\end{align}
Since $h(X,Y)$ is symmetric with respect to $X$ and $Y$, we find $ g(JX,Y) g(\lambda, Z)=0$, which implies $g(\lambda, Z)=0$ for any $Z\in \Gamma ({\mathfrak{D}}^\theta)$. 
\end{proof}

\begin{remark}\label{R4}  Proposition \ref{WP1} shows that in our case the Lee vector field $ \alpha^{\#}$ is in $ \mathfrak{D}$.
\end{remark}

\begin{remark}\label{R5} For a proper CR-product of an $LCK$-manifold, the Lee vector field $ \alpha^{\#}$ is normal to  $ \mathfrak{D^{\perp}}$ (see \cite{BK04}).
\end{remark}

Now, we prove the following useful lemma.

\begin{lemma}\label{WL2}
Let $M=N^{T}\times {_{f}}N^{\theta }$ be a pointwise semi-slant warped product submanifold of an $LCK$-manifold $\tilde{M}$, where $N^{T}$ and $N^{\theta}$ are holomorphic and proper pointwise slant submanifolds of $\tilde{M}$, respectively and the Lee vector field $ \lambda$ is tangent to $M$. Then, we have

\vskip.05in
\noindent {\rm (i)} $g(h(X, Y), FZ)=0$,

\vskip.05in
\noindent{\rm (ii)} $g(h(X, Z), FW)=[g(\lambda,JX)-JX(\ln f)] g(Z, W)+[g(\lambda,X)-X(\ln f)]g(Z, PW)$
for any  $X, Y\in\Gamma({\mathfrak{D}})$ and $Z, W\in\Gamma({\mathfrak{D}}^\theta)$.
\end{lemma}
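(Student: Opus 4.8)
The plan is to prove both identities by the same strategy used in Lemma~\ref{L3.1} and Lemma~\ref{L3.3}: compute $g(h(\cdot,\cdot),FZ)=g(\tilde\nabla_\bullet\bullet,JZ)-g(\tilde\nabla_\bullet\bullet,PZ)$, then feed in the covariant-derivative formula \eqref{2.2} for $J$ together with the warped-product structure equations of Lemma~\ref{WL1} and the pointwise-slant relation \eqref{2.8}. Throughout I will use Remark~\ref{R4}, which tells us that the Lee vector field $\lambda=\alpha^{\#}$ lies in $\mathfrak{D}$; this is what makes several $g(\lambda,\cdot)$ terms against $\mathfrak{D}^\theta$-vectors drop out and keeps the right-hand sides clean.

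For part (i), the key observation is that equation \eqref{S1} in the proof of Proposition~\ref{WP1} has \emph{already} computed $g(h(X,Y),FZ)=g(X,Y)g(J\lambda,Z)+g(JX,Y)g(\lambda,Z)$. Since Proposition~\ref{WP1} established $g(\lambda,Z)=0$ for all $Z\in\Gamma(\mathfrak{D}^\theta)$, and since $J\lambda\in\mathfrak{D}$ (because $\lambda\in\mathfrak{D}$ and $\mathfrak{D}$ is $J$-invariant), both terms $g(\lambda,Z)$ and $g(J\lambda,Z)=-g(\lambda,JZ)=-g(\lambda,PZ)-g(\lambda,FZ)$ vanish when tested against $\mathfrak{D}^\theta$, giving $g(h(X,Y),FZ)=0$ at once. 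So part~(i) is essentially immediate from what precedes it, and I would simply cite \eqref{S1} and Proposition~\ref{WP1}.

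For part (ii), I would start from $g(h(X,Z),FW)=g(\tilde\nabla_X Z, JW)-g(\tilde\nabla_X Z, PW)$, write $JW=PW+FW$, and expand $g(\tilde\nabla_X Z,JW)=g(J\tilde\nabla_X Z,W)$ via the Hermitian property, then reexpress $J\tilde\nabla_X Z=\tilde\nabla_X(JZ)-(\tilde\nabla_X J)Z$ and substitute \eqref{2.2}. The warped-product identity $\nabla_X Z=X(\ln f)Z$ from Lemma~\ref{WL1}(ii) converts tangential covariant derivatives into multiplication by $X(\ln f)$ (and $JX(\ln f)$), which is exactly where the two warping-derivative terms on the right-hand side are generated, while \eqref{2.2} with $\lambda\in\mathfrak{D}$ produces the two Lee-form terms $g(\lambda,JX)g(Z,W)$ and $g(\lambda,X)g(Z,PW)$; the relation \eqref{2.8}, $P^2Z=-\cos^2\theta\,Z$, is used to collapse the $PW$-contributions. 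The main obstacle I expect is the careful bookkeeping of the mixed $P$/$F$ components and making sure each of the four target terms is assembled with the correct sign — in particular, correctly handling $g(\tilde\nabla_X Z,PW)$ where $PW\in\mathfrak{D}^\theta$ is itself a slant vector, so that Lemma~\ref{WL1}(ii) applies to $\tilde\nabla_X$ acting along the $N^\theta$-direction and contributes $X(\ln f)g(Z,PW)$. Organizing the algebra so these cancel and combine into precisely $[g(\lambda,JX)-JX(\ln f)]g(Z,W)+[g(\lambda,X)-X(\ln f)]g(Z,PW)$ is the only delicate point; everything else is a direct substitution.
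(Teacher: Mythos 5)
Part (i) of your proposal is correct and is essentially the paper's own argument: symmetrize \eqref{S1} in $X,Y$ to kill the $g(JX,Y)g(\lambda,Z)$ term, and then use Proposition \ref{WP1} together with the tangency of $\lambda$ to conclude $g(J\lambda,Z)=-g(\lambda,PZ)-g(\lambda,FZ)=0$.

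For part (ii), however, the specific decomposition you propose does not produce the formula. You write $h(X,Z)=\tilde\nabla_XZ-\nabla_XZ$ and move $J$ onto $Z$ via $J\tilde\nabla_XZ=\tilde\nabla_X(JZ)-(\tilde\nabla_XJ)Z$. Under the standing hypotheses the term $(\tilde\nabla_XJ)Z$ is identically zero: by \eqref{2.2} it equals $g(\lambda,JZ)X-g(\lambda,Z)JX+g(JX,Z)\lambda+g(X,Z)J\lambda$, and every coefficient vanishes ($g(\lambda,Z)=g(\lambda,PZ)=0$ by Proposition \ref{WP1}, $g(\lambda,FZ)=0$ by tangency of $\lambda$, and $g(JX,Z)=g(X,Z)=0$ by orthogonality). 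So this route cannot generate the Lee-form terms you claim it produces. Moreover, expanding $\tilde\nabla_X(JZ)=\tilde\nabla_X(PZ)+\tilde\nabla_X(FZ)$ gives $g(\tilde\nabla_XPZ,W)=X(\ln f)g(PZ,W)$ and $g(\tilde\nabla_XFZ,W)=-g(A_{FZ}X,W)=-g(h(X,W),FZ)$; carrying the computation through, the $X(\ln f)$ contributions cancel against $g(\nabla_XZ,PW)$ and you are left with only the tautological symmetry $g(h(X,Z),FW)=g(h(X,W),FZ)$. In particular the terms $JX(\ln f)\,g(Z,W)$ and $g(\lambda,JX)\,g(Z,W)$ can never appear, since nothing in your route ever differentiates $JX$.

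The fix --- and the paper's actual route --- is to use the symmetry $h(X,Z)=h(Z,X)=\tilde\nabla_ZX-\nabla_ZX$ and move $J$ onto the $\mathfrak{D}$-vector: $J\tilde\nabla_ZX=\tilde\nabla_Z(JX)-(\tilde\nabla_ZJ)X$. Then $JX$ remains tangent and in $\mathfrak{D}$, so no shape operator appears, Lemma \ref{WL1}(ii) yields $g(\tilde\nabla_ZJX,W)=JX(\ln f)g(Z,W)$ and $g(J\nabla_ZX,W)=X(\ln f)g(PZ,W)$, and now $(\tilde\nabla_ZJ)X$ is the term that is \emph{not} zero: it contributes $g(\lambda,JX)g(Z,W)-g(\lambda,X)g(PZ,W)$. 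Combining these and using $g(PZ,W)=-g(Z,PW)$ gives the stated identity. Incidentally, relation \eqref{2.8} is not needed anywhere in this computation, and note the sign $g(\tilde\nabla_XZ,JW)=-g(J\tilde\nabla_XZ,W)$, not $+$.
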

\begin{proof} From \eqref{S1} with the symmetry of $h$, we have
\begin{align}\label{S2}
g(h(X,Y), FZ)=g(X,Y)g(J\lambda,Z)+g(X,JY)g(\lambda, Z),
\end{align}
for any  $X, Y\in\Gamma({\mathfrak{D}})$ and $Z\in\Gamma({\mathfrak{D}}^\theta)$. Thus, it follows from \eqref{S1} and \eqref{S2} that
\begin{align*}
g(h(X, Y), FZ)=g(X, Y) g(\beta^{\#}, Z).
\end{align*}
Hence, the first part of the lemma follows from above relation by using Proposition \ref{WP1} and the fact that $\lambda$ is tangent to $M$. For the second part, we have 
\begin{align*}
g(h(X, Z), FW)=g(h(X, Z), JW)=-g(J(\tilde\nabla_ZX-\nabla_ZX), W),
\end{align*}
for any  $X\in\Gamma({\mathfrak{D}})$ and $Z,W\in\Gamma({\mathfrak{D}}^\theta)$. Thus, from the covariant derivative property of $J$, we find
\begin{align*}
g(h(X, Z), FW)=g((\tilde\nabla_ZJ)X,W)-g(\tilde\nabla_ZJX,W)+g(J\nabla_ZX, W).
\end{align*}
Using Lemma \ref{WL1}(ii), we get
\begin{align*}
g(h(X, Z), FW)=g((\tilde\nabla_ZJ)X,W)-JX(\ln f)g(Z,W)+X(\ln f)g(JZ,W).
\end{align*}
Now, using (\ref{2.2}) we find 
\begin{align*}
g(h(X, Z), FW)&=
[g(\lambda,JX)-JX(\ln f)]g(Z,W)+[X(\ln f)-g(\lambda,X)]g(PZ,W),\notag
\end{align*}
which  proves statement (ii).
\end{proof}
The following relations are obtained easily by interchanging $X$ with $JX$, $Z$ with $PZ$, and $W$ with $PW$ in Lemma \ref{WL2}(ii).
\begin{equation}\begin{aligned}\label{S3}
g(h(X, PZ), FW)=\,&\big[JX(\ln f)-g(\lambda,JX)\,\big]g(Z,PW)\\
& +\cos^2\theta[g(\lambda,X)-X(\ln f)\big]g(Z, W),
\end{aligned}\end{equation}
\begin{equation}\begin{aligned}\label{S4}
g(h(X, Z), FPW)=\,&\cos^2\theta\big[X(\ln f)-g(\lambda,X)\big]g(Z, W)\\
& +[g(\lambda,JX)-JX(\ln f)\,]g(Z, PW),
\end{aligned}\end{equation}
\begin{equation}\begin{aligned}\label{S5}
g(h(X, PZ), FPW)=\,&\cos^2\theta[g(\lambda,JX)-JX(\ln f)]g(Z,W)\\
&+\cos^2\theta[g(\lambda,X)-X(\ln f)] g(Z,PW),
\end{aligned}\end{equation}
\begin{equation}\begin{aligned}\label{3.3}
g(h(JX, Z), FW)=\,&[X(\ln f)-g(\lambda,X)]g(Z,W)\\
& +[g(\lambda,JX)-JX(\ln f)]g(Z,PW),
\end{aligned}\end{equation}
\begin{equation}\begin{aligned}\label{3.4}
g(h(JX, PZ), FW)=\,&[g(\lambda,X)-X(\ln f)\,]g(Z,PW)+\\
& \cos^2\theta[g(\lambda,JX)-JX(\ln f)]g(Z, W),
\end{aligned}\end{equation}
\begin{equation}\begin{aligned}\label{3.5}
g(h(JX, Z), FPW)=\,&\cos^2\theta[JX(\ln f)-g(\lambda,JX)]g(Z, W)\\
& +[X(\ln f)-g(\lambda,X)] g(Z, PW),
\end{aligned}\end{equation}
\begin{equation}\begin{aligned}\label{3.6}
g(h(JX, PZ), FPW)=\,&\cos^2\theta[X(\ln f)-g(\lambda,X)]g(Z, W)\\
& +\cos^2\theta[g(\lambda,JX)-JX(\ln f)\,]g(Z,PW).
\end{aligned}\end{equation}

Now, we give the following result for later use.

\begin{corollary}\label{WC1}
Let $M=N^{T}\times {_{f}}N^{\theta }$ be a nontrivial warped product pointwise semi-slant submanifold of an $LCK$-manifold $\tilde{M}$ and Lee field tangent to $M$. Then we have
\begin{align*}
g(h(X, PZ), FW)=-g(h(X, Z), FPW)
\end{align*}
for any $X,Y\in\Gamma({\mathfrak{D}})$ and $Z,W\in\Gamma({\mathfrak{D}}^\theta)$.
\end{corollary}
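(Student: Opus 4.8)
The plan is to obtain the identity as an immediate algebraic consequence of the two component relations \eqref{S3} and \eqref{S4}, both of which the excerpt has already extracted from Lemma \ref{WL2}(ii). Each of these expresses the relevant component of the second fundamental form in terms of the same two scalar quantities $g(Z,W)$ and $g(Z,PW)$, so rather than returning to the Gauss and Weingarten formulas I would simply combine the two identities directly.

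First I would record \eqref{S3}, which gives $g(h(X,PZ),FW)$ with coefficient $[JX(\ln f)-g(\lambda,JX)]$ on $g(Z,PW)$ and coefficient $\cos^2\theta\,[g(\lambda,X)-X(\ln f)]$ on $g(Z,W)$; and \eqref{S4}, which gives $g(h(X,Z),FPW)$ with coefficient $[g(\lambda,JX)-JX(\ln f)]$ on $g(Z,PW)$ and coefficient $\cos^2\theta\,[X(\ln f)-g(\lambda,X)]$ on $g(Z,W)$. Adding these two relations, the coefficient of $g(Z,PW)$ becomes $[JX(\ln f)-g(\lambda,JX)]+[g(\lambda,JX)-JX(\ln f)]=0$, and the coefficient of $\cos^2\theta\,g(Z,W)$ becomes $[g(\lambda,X)-X(\ln f)]+[X(\ln f)-g(\lambda,X)]=0$. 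Hence the entire right-hand side of the sum vanishes, leaving $g(h(X,PZ),FW)+g(h(X,Z),FPW)=0$, which is exactly the assertion.

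There is no real obstacle here: the corollary is purely formal once \eqref{S3} and \eqref{S4} are available, and the only thing to check is the pairwise cancellation of the four coefficient terms exhibited above. The one point worth stating carefully is that those two relations are themselves honest consequences of Lemma \ref{WL2}(ii)—\eqref{S3} via the replacement $Z\mapsto PZ$ together with the skew-symmetry $g(PZ,W)=-g(Z,PW)$ and the slant identity \eqref{2.9}, and \eqref{S4} via $W\mapsto PW$ together with $P^2W=-\cos^2\theta\,W$ from \eqref{2.8}. Since the excerpt grants these, the proof of the corollary reduces to the elementary cancellation above, and neither the nontriviality of the warped product nor anything beyond the tangency of the Lee field (already used in deriving \eqref{S3} and \eqref{S4}) is needed.
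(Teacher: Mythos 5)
Your proof is correct and follows exactly the paper's route: the authors also derive the corollary directly from \eqref{S3} and \eqref{S4}, and your explicit verification that the coefficients of $g(Z,PW)$ and $g(Z,W)$ cancel upon adding the two relations is just the detail the paper leaves implicit.
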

\begin{proof}
The proof follows from \eqref{S3} and \eqref{S4}.
\end{proof}

For a proper pointwise semi-slant warped product  $M=N^{T}\times {_{f}}N^{\theta }$ in an $LCK$-manifold $\tilde{M}$, let $\nu$ denote the invariant subbundle of  $T^{\perp}M$ which is the orthogonal complement of $F{\mathfrak D}^{\theta}$ in $T^{\perp}M$ so that 
\begin{align}\label{nu} T^{\perp}M= F{\mathfrak D}^{\theta}\oplus \nu.\end{align}

\begin{theorem}\label{WT1}
On a proper pointwise semi-slant warped product  $M=N^{T}\times {_{f}}N^{\theta }$ in an $LCK$-manifold $\tilde{M}$, if $ h(X,Z) \in\nu$ for any  $X\in\Gamma({\mathfrak{D}})$ and $Z\in\Gamma({\mathfrak{D}}^\theta)$, then we have $\alpha(X)=X(\ln f)$, where $\alpha$ is the Lee form.
\end{theorem}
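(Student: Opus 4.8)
The plan is to extract the $F\mathfrak{D}^{\theta}$-component of $h(X,Z)$ from the formula already recorded in Lemma \ref{WL2}(ii), and then use the hypothesis $h(X,Z)\in\nu$ to force the two bracketed coefficients in that formula to vanish separately. Since $\nu$ is, by \eqref{nu}, the orthogonal complement of $F\mathfrak{D}^{\theta}$ in $T^{\perp}M$, the assumption $h(X,Z)\in\nu$ is equivalent to $g(h(X,Z),FW)=0$ for every $W\in\Gamma(\mathfrak{D}^{\theta})$. Feeding this into Lemma \ref{WL2}(ii) yields, for all $X\in\Gamma(\mathfrak{D})$ and $Z,W\in\Gamma(\mathfrak{D}^{\theta})$,
\[
[g(\lambda,JX)-JX(\ln f)]\,g(Z,W)+[g(\lambda,X)-X(\ln f)]\,g(Z,PW)=0.
\]

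Next I would split this identity into its symmetric and antisymmetric parts in the pair $(Z,W)$. The form $g(Z,W)$ is symmetric, while $g(Z,PW)=-g(PZ,W)$ is antisymmetric, because $P$ is skew-symmetric on the tangent bundle (an immediate consequence of $g(JX,Y)=-g(X,JY)$ together with \eqref{2.6}). Hence both parts must vanish identically. Concretely, putting $W=Z$ annihilates the antisymmetric term, since $g(Z,PZ)=0$, and leaves $[g(\lambda,JX)-JX(\ln f)]\,g(Z,Z)=0$, which forces $g(\lambda,JX)=JX(\ln f)$. With that term removed, I would then choose $Z=PW$ and invoke \eqref{2.9}, giving $g(PW,PW)=\cos^{2}\theta\,g(W,W)$; this quantity is strictly positive for $W\neq 0$ because the warped product is proper, so the surviving factor $g(\lambda,X)-X(\ln f)$ must vanish.

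The resulting identity $g(\lambda,X)=X(\ln f)$ is precisely the claim $\alpha(X)=X(\ln f)$, since by definition of the Lee vector field $\alpha(X)=g(X,\lambda)$.

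I expect the one genuinely essential point to be the isolation of the second coefficient: this works only because $\cos^{2}\theta>0$, i.e.\ because of the \emph{properness} hypothesis, which guarantees that $g(\,\cdot\,,P\,\cdot\,)$ is a truly nonvanishing antisymmetric form on $\mathfrak{D}^{\theta}$ rather than being identically zero. Without properness one could only conclude the vanishing of the first bracket; beyond this observation I anticipate no computational obstacle.
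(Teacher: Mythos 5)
Your proof is correct. The opening reduction---that, by the orthogonal splitting \eqref{nu}, the hypothesis $h(X,Z)\in\nu$ is equivalent to $g(h(X,Z),FW)=0$ for all $W\in\Gamma(\mathfrak{D}^\theta)$, so that Lemma \ref{WL2}(ii) collapses to $[g(\lambda,JX)-JX(\ln f)]\,g(Z,W)+[g(\lambda,X)-X(\ln f)]\,g(Z,PW)=0$---is exactly the paper's starting point, its equation \eqref{S7}. Where you diverge is in how the two bracketed coefficients are separated. The paper brings in a \emph{second} identity, namely \eqref{3.5} (Lemma \ref{WL2}(ii) with $X$ replaced by $JX$ and $W$ by $PW$), which under the same hypothesis gives $\cos^2\theta[JX(\ln f)-g(\lambda,JX)]g(Z,W)+[X(\ln f)-g(\lambda,X)]g(Z,PW)=0$; adding the two equations cancels the $g(Z,PW)$ terms and leaves $\sin^2\theta[g(\lambda,JX)-JX(\ln f)]g(Z,W)=0$, whence the claim since $\sin^2\theta\neq 0$. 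You instead stay with the single identity and exploit that $g(Z,W)$ is symmetric in $(Z,W)$ while $g(Z,PW)=-g(PZ,W)$ is antisymmetric (valid, since $P$ is the tangential part of the skew-adjoint $J$ and $F\mathfrak{D}^\theta\perp TM$): setting $W=Z$ isolates the first bracket, and setting $Z=PW$ together with \eqref{2.9} and $\cos^2\theta>0$ isolates the second. Both routes are sound and of comparable length; yours is somewhat more self-contained, as it does not rely on the auxiliary relation \eqref{3.5}. One small correction to your closing remark: properness of the slant function is not actually needed even on your route, because once the first bracket vanishes for every $X\in\Gamma(\mathfrak{D})$, the $J$-invariance of $\mathfrak{D}$ lets you replace $X$ by $JX$ and read off $g(\lambda,X)=X(\ln f)$ directly, without ever touching the second bracket; it is the paper's elimination that genuinely uses $\sin^2\theta\neq 0$.
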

 \begin{proof}
 By virtue of (\ref{WL2})(i) and the hypothesis of the theorem, we have
\begin{align}\label{S7}
[g(\lambda,JX)-JX(\ln f)]g(Z, W)+[g(\lambda,X)-   X(\ln f)]g(Z, PW)=0,
\end{align}
for any $X\in\Gamma({\mathfrak{D}})$ and $Z, W\in\Gamma({\mathfrak{D}}^\theta)$. Also, from \eqref{3.5} and the hypothesis of the theorem, we derive
\begin{align}\label{S8} 
\cos^2\theta[JX(\ln f)-g(\lambda,JX)]g(Z, W)+[X(\ln f)-g(\lambda,X)] g(Z, PW)=0.
\end{align}
Hence, it follows from \eqref{S7} and \eqref{S8} that
\begin{align}\label{S9} 
\sin^2\theta[g(\lambda,JX)-JX(\ln f)]g(Z, W)=0.
\end{align}
Since $M$ is proper pointwise semi-slant and $g$ is the Riemannian metric, the desired result follows from \eqref{S9}.
\end{proof}

\begin{corollary}\label{WC2}
Let $N^{T}\times {_{f}}N^{\theta }$ be a mixed totally geodesic pointwise semi-slant warped product in an $LCK$-manifold $\tilde{M}$. Then $\alpha(X)=X(\ln f)$ for any  $X\in\Gamma({\mathfrak{D}})$.
\end{corollary}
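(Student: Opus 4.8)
The plan is to read off the corollary as an essentially immediate specialization of Theorem \ref{WT1}. First I would unwind the hypothesis: by Definition \ref{D3}(iii), saying that the warped product $N^{T}\times_{f}N^{\theta}$ is \emph{mixed totally geodesic} means precisely that $h(X,Z)=0$ for every $X\in\Gamma(\mathfrak{D})$ and every $Z\in\Gamma(\mathfrak{D}^{\theta})$. The crucial observation is then that this is strictly stronger than the hypothesis of Theorem \ref{WT1}. Indeed, in the decomposition \eqref{nu} the zero normal vector lies in the invariant subbundle $\nu$, so the vanishing $h(X,Z)=0$ gives in particular the trivial inclusion $h(X,Z)\in\nu$ for all such $X$ and $Z$.

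Having matched the hypotheses, the second step is simply to invoke Theorem \ref{WT1}, which then yields $\alpha(X)=X(\ln f)$ for all $X\in\Gamma(\mathfrak{D})$, i.e.\ exactly the assertion of the corollary. If one prefers a self-contained argument that does not quote Theorem \ref{WT1}, the same conclusion can be extracted directly: substituting $h(X,Z)=0$ into Lemma \ref{WL2}(ii) produces relation \eqref{S7}, while replacing $X$ by $JX$ (which again lies in $\mathfrak{D}$, so $h(JX,Z)=0$ as well) and using \eqref{3.5} produces \eqref{S8}; eliminating the $g(Z,PW)$-term between the two recovers \eqref{S9}, and properness ($\sin^{2}\theta\neq 0$) together with the $J$-invariance of $\mathfrak{D}$ finishes the argument.

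I do not expect a genuine obstacle here. The only point that actually needs verification is that mixed total geodesy implies the condition $h(X,Z)\in\nu$ of Theorem \ref{WT1}, and this is transparent once one notes that $0\in\nu$. All of the real computational content -- the shape-operator identities packaged in Lemma \ref{WL2} and the derived relations \eqref{S3}--\eqref{3.6} -- has already been established, so the proof reduces to an application of an existing theorem rather than any new calculation.
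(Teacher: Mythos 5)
Your proposal is correct and matches the paper's (implicit) argument: the corollary is stated immediately after Theorem \ref{WT1} with no separate proof, precisely because mixed total geodesy gives $h(X,Z)=0\in\nu$, so Theorem \ref{WT1} applies verbatim. Your self-contained alternative via Lemma \ref{WL2}(ii) and relations \eqref{S7}--\eqref{S9} is just the proof of Theorem \ref{WT1} itself, so nothing new is needed there either.
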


By using Lemma \ref{WL2}, we deduce the following result.

\begin{theorem}\label{WT2}
 Let $M=N^{T}\times {_{f}}N^{\theta }$ a proper pointwise semi-slant warped product in an $LCK$-manifold $\tilde{M}$ and the Lee vector field $ \alpha^{\#}$ is tangent to $M$. Then
 \begin{enumerate}
\item [{\rm (i)}] $g(A_{FZ} X, Y)=0$.
\item [{\rm (ii)}]  $g(A_{FZ}JX-A_{FPZ}X, W)=\sin^2\theta[X(\ln f)-\alpha(X)] g(Z, W)$
\end{enumerate}
for any  $X,Y\in\Gamma({\mathfrak{D}})$ and $Z, W\in\Gamma({\mathfrak{D}}^\theta)$.
\end{theorem}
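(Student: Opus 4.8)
The plan is to derive both statements directly from Lemma \ref{WL2} and the eight auxiliary identities \eqref{S3}--\eqref{3.6}, translating between the shape operator and the second fundamental form through the compatibility relation \eqref{2.5}, $g(A_\xi U,V)=g(h(U,V),\xi)$. For part (i) this is immediate: by \eqref{2.5} we have $g(A_{FZ}X,Y)=g(h(X,Y),FZ)$, and the right-hand side vanishes by Lemma \ref{WL2}(i). So the whole content lies in part (ii).

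For part (ii), the first step is to rewrite the left-hand side using \eqref{2.5} as $g(A_{FZ}JX-A_{FPZ}X,W)=g(h(JX,W),FZ)-g(h(X,W),FPZ)$. The second step is to evaluate each term separately. I would apply identity \eqref{3.3}, with the roles of $Z$ and $W$ interchanged, to compute $g(h(JX,W),FZ)$, and identity \eqref{S4}, again with $Z$ and $W$ interchanged, to compute $g(h(X,W),FPZ)$. In carrying out these substitutions, I must convert $g(W,PZ)$ into $-g(Z,PW)$ using the skew-symmetry of $P$, which follows from $g(PU,V)=g(JU,V)=-g(U,JV)=-g(U,PV)$ (the normal components dropping out because one argument is tangent). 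Writing $a=g(\lambda,JX)-JX(\ln f)$ and $b=g(\lambda,X)-X(\ln f)$ to keep the bookkeeping compact, these evaluations give $g(h(JX,W),FZ)=-b\,g(Z,W)-a\,g(Z,PW)$ and $g(h(X,W),FPZ)=-\cos^2\theta\,b\,g(Z,W)-a\,g(Z,PW)$.

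The final step is to subtract. The terms carrying the factor $a\,g(Z,PW)$ cancel exactly, and the coefficient of $g(Z,W)$ collapses to $b(\cos^2\theta-1)=-\sin^2\theta\,b$. Since $\lambda=\alpha^{\#}$ is the Lee vector field we have $g(\lambda,X)=\alpha(X)$, so $b=\alpha(X)-X(\ln f)$, and the expression rearranges into the claimed formula $\sin^2\theta\,[X(\ln f)-\alpha(X)]\,g(Z,W)$. The computation is essentially routine; the only point requiring genuine care is the sign accounting produced by the two index interchanges and by the skew-symmetry of $P$, since it is precisely these signs that force the $g(Z,PW)$ terms to cancel and the $g(Z,W)$ terms to combine into the factor $-\sin^2\theta$. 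I do not expect any obstacle beyond this bookkeeping.
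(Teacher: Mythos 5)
Your proposal is correct and follows essentially the same route as the paper: part (i) is exactly Lemma \ref{WL2}(i) read through \eqref{2.5}, and part (ii) is obtained by combining \eqref{3.3} and \eqref{S4} (the paper's proof cites precisely these two identities, while you fill in the $Z\leftrightarrow W$ relabeling and the skew-symmetry of $P$ explicitly, with the signs handled correctly).
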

\begin{proof} The first part is nothing but Lemma \ref{WL2} (i). The second part follows from \eqref{S4} and \eqref{3.3}.
\end{proof}

\begin{corollary}\label{WC3}
There do not exist a mixed totally geodesic CR-warped product submanifold of the form $N^{T}\times {_{f}}N^{\bot}$ in a Kaehler manifold $\tilde{M}$.
\end{corollary}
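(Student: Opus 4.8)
The plan is to obtain Corollary \ref{WC3} as a specialization of Corollary \ref{WC2}, exploiting the fact that a Kaehler manifold is precisely an $LCK$-manifold whose Lee form vanishes. First I would recall that on a Kaehler manifold $\tilde\nabla J=0$, so the right-hand side of \eqref{2.2} is identically zero; equivalently the Lee vector field $\lambda=\alpha^{\#}$ is zero and the Lee form $\alpha$ vanishes identically. In particular $\alpha^{\#}=0$ is trivially tangent to $M$, so the standing hypothesis on the Lee field in the warped-product results of this section is automatically met.

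Next I would regard a $CR$-warped product $N^{T}\times_{f}N^{\perp}$ as the pointwise semi-slant warped product $N^{T}\times_{f}N^{\theta}$ with constant slant function $\theta=\frac{\pi}{2}$, so that $N^{\theta}=N^{\perp}$ is totally real, $P\equiv 0$ on $\mathfrak{D}^{\theta}$, and $\sin^2\theta\equiv 1$. The mixed totally geodesic assumption means $h(X,Z)=0$ for all $X\in\Gamma(\mathfrak{D})$ and $Z\in\Gamma(\mathfrak{D}^{\theta})$; in particular $h(X,Z)\in\nu$. Applying Corollary \ref{WC2} then gives
\begin{align*}
\alpha(X)=X(\ln f),\qquad X\in\Gamma(\mathfrak{D})=T(N^{T}).
\end{align*}
If one prefers to bypass the \emph{proper} case entirely, the same identity drops straight out of Lemma \ref{WL2}(ii): with $P\equiv 0$ the term carrying $g(Z,PW)$ disappears, leaving $[g(\lambda,JX)-JX(\ln f)]g(Z,W)=0$, whence $g(\lambda,JX)=JX(\ln f)$, and replacing $X$ by $JX\in\mathfrak{D}$ yields $\alpha(X)=X(\ln f)$.

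Finally, inserting $\alpha\equiv 0$ for the Kaehler ambient space forces $X(\ln f)=0$ for every $X$ tangent to $N^{T}$. Since the warping function $f$ lives on the base $N^{T}$, this makes $\ln f$, and hence $f$, constant on $N^{T}$. A constant warping function means the warped product is trivial, i.e. a Riemannian $CR$-product rather than a genuine $CR$-warped product, contradicting the hypothesis. Hence no nontrivial mixed totally geodesic $CR$-warped product $N^{T}\times_{f}N^{\perp}$ exists in a Kaehler manifold.

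I expect the only delicate point to be bookkeeping rather than analysis: one must confirm that the totally real case $\theta=\frac{\pi}{2}$ is legitimately covered, even though several earlier lemmas are stated for a \emph{proper} pointwise slant factor. The resolution is that the identity actually invoked, Lemma \ref{WL2}(ii), involves dividing only by $\sin^2\theta$; since $\sin^2\theta=1$ when $\theta=\frac{\pi}{2}$, no degeneracy arises and the conclusion $\alpha(X)=X(\ln f)$ survives in the $CR$ setting.
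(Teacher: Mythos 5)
Your proposal is correct and takes essentially the same route as the paper, which simply cites Theorem \ref{WT2}(ii): specializing that identity to $\theta=\tfrac{\pi}{2}$, $P|_{\mathfrak{D}^\theta}=0$ and $\alpha=0$, with the mixed totally geodesic hypothesis killing the left-hand side, gives $X(\ln f)=0$ and hence a trivial warped product. Your use of Corollary \ref{WC2} (equivalently Lemma \ref{WL2}(ii)) is the same underlying computation packaged through a neighbouring result, and your observation that no division by $\cos\theta$ occurs, so the totally real case is legitimately covered, is a sound piece of bookkeeping.
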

\begin{proof} Follows from Theorem \ref{WT2}(ii).
\end{proof} 

\section{Characterizations theorems}\label{Sec5}

Now, we provide a characterization of pointwise semi-slant warped products.

\begin{theorem}\label{TC1}
 Let $M=N^{T}\times {_{f}}N^{\theta }$ a proper pointwise semi-slant warped product in an $LCK$-manifold $\tilde{M}$ with its Lee vector field $ \alpha^{\#}$ tangent to $M$. Then we have
\begin{align}\label{SS}
X(\ln f)=\alpha(X)+\tan\theta\;X(\theta)
\end{align}
for any $X\in\Gamma({\mathfrak{D}})$.
\end{theorem}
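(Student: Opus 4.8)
The plan is to evaluate the mixed component $g(h(X,PZ),FW)$ in two different ways and compare. On one hand this quantity is already pinned down algebraically: it is exactly \eqref{S3}, which expresses it through $X(\ln f)$, $JX(\ln f)$, the Lee terms $g(\lambda,X)$, $g(\lambda,JX)$ and the two scalars $g(Z,W)$, $g(Z,PW)$, with no derivative of the slant function present. On the other hand I would recompute the same component directly from the Gauss--Weingarten apparatus, arranging matters so that a factor $\cos^2\theta$ gets differentiated; this is the only mechanism by which $X(\theta)$ can enter, since every relation in Section~\ref{Sec4} following Lemma~\ref{WL2} is a purely algebraic consequence of it and therefore carries no $\theta$-derivative.

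Concretely, I would start from $g(h(X,PZ),FW)=g(\tilde\nabla_X PZ,FW)$ (Gauss, as $FW$ is normal) and split $FW=JW-PW$. The tangential piece gives $g(\tilde\nabla_XPZ,PW)=X(\ln f)\,g(PZ,PW)=X(\ln f)\cos^2\theta\,g(Z,W)$ by Lemma~\ref{WL1}(ii) and \eqref{2.9}. For the other piece I would write $g(\tilde\nabla_XPZ,JW)=-g(\tilde\nabla_XJPZ,W)+g((\tilde\nabla_XJ)PZ,W)$ and kill the last term using \eqref{2.2} together with Proposition~\ref{WP1} (so that $\lambda\in\mathfrak D$ is orthogonal to $Z,PZ,W$) and the orthogonality of the two distributions. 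The decisive step is $JPZ=P^2Z+FPZ=-\cos^2\theta\,Z+FPZ$ by \eqref{2.8}: differentiating the scalar $\cos^2\theta$ produces $-X(\cos^2\theta)=\sin 2\theta\,X(\theta)$, while the $FPZ$-part is handled by Weingarten and reassembled into $g(h(X,W),FPZ)$. After the $\cos^2\theta\,X(\ln f)$ contributions cancel, this route should yield
\begin{align*}
g(h(X,PZ),FW)=-\sin 2\theta\,X(\theta)\,g(Z,W)+g(h(X,W),FPZ).
\end{align*}

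Finally I would substitute \eqref{S3} for the left-hand side and \eqref{S4} (with $Z$ and $W$ interchanged) for $g(h(X,W),FPZ)$. The $g(Z,PW)$-terms cancel, and the $g(Z,W)$-coefficients reduce to $2\cos^2\theta\,[X(\ln f)-\alpha(X)]=\sin 2\theta\,X(\theta)$; dividing by $2\cos^2\theta$ (legitimate since $M$ is proper, so $\cos\theta\neq0$) and using $\sin 2\theta=2\sin\theta\cos\theta$ gives \eqref{SS}. The main obstacle is the bookkeeping of the slant-function derivative: one must deliberately test the component $g(h(X,PZ),FW)$ rather than $g(h(X,Z),FW)$, precisely because inserting $P^2Z$ forces a $\cos^2\theta$ to be differentiated, and one must check that all Lee-form corrections coming from \eqref{2.2} genuinely drop out, so that the sole surviving $\theta$-derivative is the $\sin 2\theta\,X(\theta)$ term that converts into $\tan\theta\,X(\theta)$.
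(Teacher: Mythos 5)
Your proposal is correct and follows essentially the same route as the paper's proof: both evaluate $g(h(X,PZ),FW)$ via the split $FW=JW-PW$, insert $JPZ=P^2Z+FPZ$ so that differentiating $\cos^2\theta$ produces the $\sin 2\theta\,X(\theta)$ term, and then compare with the algebraic identities \eqref{S3} and \eqref{S4} to cancel the $g(Z,PW)$-terms and isolate $2\cos^2\theta\,[X(\ln f)-\alpha(X)]=\sin 2\theta\,X(\theta)$.
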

\begin{proof}
For any  $X\in\Gamma({\mathfrak{D}})$ and $Z, W\in\Gamma({\mathfrak{D}}^\theta)$, we have
\begin{align*}
g(h(X,PZ), FW)=g(\tilde\nabla_XPZ, FW)=g(\tilde\nabla_XPZ, JW)-g(\tilde\nabla_XPZ, PW).
\end{align*}
From the covariant derivative property of $J$, we obtain
\begin{align*}g(A_{FW}PZ, X)=g((\tilde\nabla_XJ)PZ,W)-g(\tilde\nabla_XJPZ,W)-X(\ln f) \cos^{2}\theta g(Z,W).
\end{align*}
 Using (\ref{2.2}) and orthogonality of vector fields, we find
\begin{align*}g(A_{FW}PZ, X)&=-g(\tilde\nabla_XP^{2}Z,W)-g(\tilde\nabla_XFPZ,W)-X(\ln f) \cos^{2}\theta g(Z,W)\\
&= X(\ln f) \cos^{2}\theta g(Z,W)-\sin 2\theta X(\theta)g(Z,W)+g(A_{FPZ}X, W)\notag\\
&\;\;\;-X(\ln f)\cos^{2}\theta g(Z,W),
\end{align*}
which implies that
\begin{align}\label{SS1}
g(A_{FPZ}X, W)-g(A_{FW}PZ, X)=\sin2\theta\;X(\theta)\;g(Z, W).
\end{align}
On the other hand, from \eqref{S3} and \eqref{S4}, we find
\begin{align}\label{SS2}
g(A_{FPZ}X, W)-g(A_{FW}PZ, X)=2\cos^2\theta\;\left(X(\ln f)-\alpha(X)\right)\;g(Z, W).
\end{align}
Thus, equation \eqref{SS} follows from \eqref{SS1} and \eqref{SS2}.
\end{proof}

In order to prove another characterization for pointwise semi-slant  warped products, recall the following well-known Hiepko's Theorem.

\begin{theorem}\label{HT}
 \cite{Hi} Let ${\mathfrak{D}}_1$ and ${\mathfrak{D}}_2$ be two orthogonal distribution on a Riemannian manifold $M$. Suppose that both  ${\mathfrak{D}}_1$ and ${\mathfrak{D}}_2$ are involutive such that ${\mathfrak{D}}_1$ is a totally geodesic foliation and ${\mathfrak{D}}_2$ is a spherical foliation. Then $M$ is locally isometric to a non-trivial warped product $M_1\times_fM_2$, where $M_1$ and $M_2$ are integral manifolds of ${\mathfrak{D}}_1$ and ${\mathfrak{D}}_2$, respectively.
\end{theorem}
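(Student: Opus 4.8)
The plan is to reconstruct the warped-product metric directly from the two foliations, producing the warping function by integrating the mean curvature vector field of the spherical leaves. First I would invoke the Frobenius theorem: since $\mathfrak{D}_1$ and $\mathfrak{D}_2$ are involutive, orthogonal and complementary, around any point there is a product chart realizing $M\cong M_1\times M_2$ with $\mathfrak{D}_1=TM_1$ and $\mathfrak{D}_2=TM_2$, where $M_1,M_2$ are the integral manifolds. It then remains only to prove that the induced metric $g$ splits as $g=g_1+f^2g_2$ for a metric $g_1$ on $M_1$, a metric $g_2$ on $M_2$, and a positive function $f$ on $M_1$.

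Next, using that $\mathfrak{D}_1$ is totally geodesic I would show $g|_{\mathfrak{D}_1}$ is constant along $\mathfrak{D}_2$: for coordinate fields $X,Y\in\mathfrak{D}_1$ and $Z\in\mathfrak{D}_2$ one has $[Z,X]=0$, so $\nabla_ZX=\nabla_XZ$, and total geodesy gives $g(\nabla_XZ,Y)=X\,g(Z,Y)-g(Z,\nabla_XY)=0$ (both terms vanishing by orthogonality and $\nabla_XY\in\mathfrak{D}_1$). Hence $Z\,g(X,Y)=0$ and $g|_{\mathfrak{D}_1}$ descends to a metric $g_1$ on $M_1$. The heart of the argument is the construction of $f$. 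Writing $H\in\mathfrak{D}_1$ for the mean curvature vector of the $\mathfrak{D}_2$-leaves, so that $(\nabla_ZW)^{\mathfrak{D}_1}=g(Z,W)H$ by umbilicity, I would combine the spherical hypothesis $\nabla^\perp_ZH=0$ with $[Z,X]=0$ to show that $g(H,X)$ is constant along $\mathfrak{D}_2$ for each basic $X\in\mathfrak{D}_1$; thus the one-form $\eta=-g(H,\cdot)|_{\mathfrak{D}_1}$ is \emph{basic} and descends to $M_1$. One then shows $\eta$ is closed on $M_1$, so locally $\eta=d(\ln f)$, i.e. $H=-\vec\nabla(\ln f)$ for a positive function $f$ on $M_1$; non-constancy of $f$ is exactly the condition $H\neq 0$ built into the sphericity of $\mathfrak{D}_2$.

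Finally I would close the argument by a computation in the spirit of Lemma \ref{WL1}: for $X\in\mathfrak{D}_1$ and $Z,W\in\mathfrak{D}_2$, using $[X,Z]=0$ and umbilicity gives $X\,g(Z,W)=-2g(Z,W)g(H,X)=2g(Z,W)\,X(\ln f)$, whence $X(f^{-2}g(Z,W))=0$. Therefore $g_2:=f^{-2}g|_{\mathfrak{D}_2}$ is constant along $\mathfrak{D}_1$ and descends to a metric on $M_2$; combining this with the orthogonality of $\mathfrak{D}_1,\mathfrak{D}_2$ and the splitting of $g|_{\mathfrak{D}_1}$ yields $g=g_1+f^2g_2$, which is precisely the warped-product metric, and the theorem follows. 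The step I expect to be the main obstacle is proving that the mean curvature one-form $\eta$ is closed: basicness alone does not suffice, and closedness must be extracted from the parallel-mean-curvature (spherical) condition, typically via the Codazzi equation for the $\mathfrak{D}_2$-leaves together with the symmetry of the associated Hessian-type tensor on the totally geodesic leaves of $\mathfrak{D}_1$. This is exactly the point at which a genuine warped product is distinguished from a merely twisted product.
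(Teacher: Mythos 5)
The paper offers no proof of Theorem \ref{HT}: it is Hiepko's criterion, quoted from \cite{Hi} and used as a black box in the proof of Theorem \ref{TC2}, so there is nothing internal to compare your argument against. Judged on its own, your plan is essentially the standard proof and is sound: the simultaneous Frobenius chart for two complementary involutive distributions, the constancy of $g|_{\mathfrak{D}_1}$ along $\mathfrak{D}_2$ coming from total geodesy, the basicness of $\eta=-g(H,\cdot)|_{\mathfrak{D}_1}$ coming from parallelism of $H$, and the rescaling $g_2=f^{-2}g|_{\mathfrak{D}_2}$ are all correct. The one step you flag as the ``main obstacle''---closedness of $\eta$---is in fact already settled by the computation in your own last paragraph and needs no Codazzi equation: taking $Z=W$ there gives $X\,g(Z,Z)=-2\,g(Z,Z)\,g(H,X)$, i.e.\ $\eta=\tfrac12\,d\bigl(\ln g(Z,Z)\bigr)\big|_{\mathfrak{D}_1}$ for any nonvanishing coordinate field $Z\in\mathfrak{D}_2$, so $\eta$ is exact on every leaf of $\mathfrak{D}_1$, and basicness then lets it descend to a locally exact form $d(\ln f)$ on $M_1$. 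The logical order should therefore be reversed: derive $X\,g(Z,W)=-2g(Z,W)g(H,X)$ first, construct $f$ from it, and only then conclude $X\bigl(f^{-2}g(Z,W)\bigr)=0$. Two small caveats: applying Frobenius to each distribution separately does not by itself give a single chart adapted to both, so you should invoke the simultaneous version for complementary involutive distributions; and the word ``non-trivial'' in the conclusion requires $H\not\equiv 0$, which must be read into the definition of a spherical (as opposed to totally geodesic) foliation, exactly as you note.
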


Now, we are able to prove another characterization of proper pointwise semi-slant
warped product submanifolds of the form $N^{T}\times_{f}N^{\theta}$.

\begin{theorem}\label{TC2}
Let $M$ be a proper pointwise semi-slant submanifold with invariant distribution $\mathfrak{D}$ and a proper pointwise slant distribution $\mathfrak{D}^{\theta}$ of an $LCK$-manifold $\tilde{M}$. Then $M$ is locally a warped product submanifold of the form $N^{T}\times_{f}N^{\theta}$ if and only if
\begin{align}\label{3.8}
A_{FZ}J X-A_{FPZ}X=\sin^2\theta\left(X(\mu)-\alpha(X)\right)Z,\,\;\forall\;X\in\Gamma({\mathfrak{D}}),\;Z\in\Gamma({\mathfrak{D}}^\theta),
\end{align}
for some smooth function $\mu$ on $M$ satisfying $W(\mu) = 0$ for any $W\in\Gamma({\mathfrak{D}}^\theta)$.
\end{theorem}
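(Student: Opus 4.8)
The plan is to prove the two implications separately: the necessity is extracted from Theorem \ref{WT2}, while the sufficiency is obtained by verifying the hypotheses of Hiepko's Theorem \ref{HT} for the pair $(\mathfrak{D},\mathfrak{D}^\theta)$. Throughout I work under the standing assumption of this part of the paper that the Lee vector field $\alpha^\#$ is tangent to $M$.

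\emph{Necessity.} Suppose $M=N^{T}\times_{f}N^{\theta}$. Since a shape operator sends tangent vectors to tangent vectors, the vector $A_{FZ}JX-A_{FPZ}X$ is tangent to $M$, so it is enough to compute its $\mathfrak{D}$- and $\mathfrak{D}^\theta$-components. For $Y\in\Gamma(\mathfrak{D})$ I would apply Theorem \ref{WT2}(i) twice: once with $X$ replaced by $JX\in\Gamma(\mathfrak{D})$ to get $g(A_{FZ}JX,Y)=0$, and once with $Z$ replaced by $PZ\in\Gamma(\mathfrak{D}^\theta)$ to get $g(A_{FPZ}X,Y)=0$. Hence the $\mathfrak{D}$-component vanishes and the vector lies in $\mathfrak{D}^\theta$. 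Its $\mathfrak{D}^\theta$-component is pinned down by Theorem \ref{WT2}(ii): for every $W\in\Gamma(\mathfrak{D}^\theta)$ one has $g(A_{FZ}JX-A_{FPZ}X,W)=\sin^2\theta\,(X(\ln f)-\alpha(X))\,g(Z,W)$, where both sides pair a vector of $\mathfrak{D}^\theta$ with an arbitrary $W\in\Gamma(\mathfrak{D}^\theta)$; positivity of the induced metric on $\mathfrak{D}^\theta$ then forces $A_{FZ}JX-A_{FPZ}X=\sin^2\theta\,(X(\ln f)-\alpha(X))\,Z$. Putting $\mu=\ln f$ and observing that $f$ is a function on $N^T$, so that $W(\mu)=0$ for all $W\in\Gamma(\mathfrak{D}^\theta)$, gives exactly \eqref{3.8}.

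\emph{Sufficiency.} Assume \eqref{3.8}. First I would show $\mathfrak{D}$ is a totally geodesic foliation: substituting \eqref{3.8} into Lemma \ref{L3.1} makes $g(A_{FZ}JY-A_{FPZ}Y,X)$ vanish, because it pairs a vector of $\mathfrak{D}^\theta$ with $X\in\Gamma(\mathfrak{D})$; using that $\alpha^\#$ lies in $\mathfrak{D}$, so that $g(\alpha^\#,FZ)=g(\beta^\#,FZ)=0$, the remaining terms drop out and $g(\nabla_XY,Z)=0$, whence $\nabla_XY\in\Gamma(\mathfrak{D})$ and $\mathfrak{D}$ is involutive and totally geodesic. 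Next, inserting \eqref{3.8} into Lemma \ref{L3.4} gives $\sin^2\theta\,g([Z,W],X)=\sin^2\theta\,(X(\mu)-\alpha(X))(g(Z,W)-g(W,Z))=0$, so $\mathfrak{D}^\theta$ is integrable. Finally, feeding \eqref{3.8} into Lemma \ref{L3.3} and cancelling the $\csc^2\theta$ factor against the $\sin^2\theta$ in \eqref{3.8}, the Lee terms cancel and I obtain $g(\nabla_ZW,X)=-X(\mu)\,g(Z,W)=-g(Z,W)\,g(\vec\nabla\mu,X)$.

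The hypothesis $W(\mu)=0$ for $W\in\Gamma(\mathfrak{D}^\theta)$ shows $\vec\nabla\mu\in\Gamma(\mathfrak{D})$, so the last identity says each leaf of $\mathfrak{D}^\theta$ is totally umbilical in $M$ with mean curvature vector $-\vec\nabla\mu$, valued in $\mathfrak{D}$. To conclude that $\mathfrak{D}^\theta$ is a \emph{spherical} foliation I would verify that this mean curvature vector is parallel in the normal bundle of the leaves, which follows because $-\vec\nabla\mu$ is the $\mathfrak{D}$-valued gradient of a function constant along the leaves and $\mathfrak{D}$ is totally geodesic. Hiepko's Theorem \ref{HT} then realizes $M$ locally as a warped product $N^{T}\times_{f}N^{\theta}$, and comparing the umbilicity identity above with Lemma \ref{WL1}(iii) identifies the warping function as $f=e^{\mu}$. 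I expect the main obstacle to be this sphericality step — checking that the mean curvature vector is \emph{parallel}, not merely that the leaves are umbilical — together with the bookkeeping ensuring $\alpha^\#$ has no $\mathfrak{D}^\theta$-component, so that the $g(\beta^\#,FZ)$ term in Lemma \ref{L3.1} genuinely vanishes.
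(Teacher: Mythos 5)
Your proposal is correct and follows essentially the same route as the paper: necessity by combining Theorem \ref{WT2}(i) (applied to $JX$ and to $PZ$) with Theorem \ref{WT2}(ii) and setting $\mu=\ln f$, and sufficiency by feeding \eqref{3.8} into Lemma \ref{L3.1} (totally geodesic foliation of $\mathfrak{D}$), Lemma \ref{L3.4} (integrability of $\mathfrak{D}^\theta$), and the computation of Lemma \ref{L3.3} (umbilicity of the leaves with mean curvature $-\vec\nabla\mu$, then parallelism), before invoking Hiepko's Theorem \ref{HT}. The one subtlety you flag --- that the term $g(\beta^{\#},FZ)$ in Lemma \ref{L3.1} only vanishes once $\alpha^{\#}$ is known to be orthogonal to $\mathfrak{D}^{\theta}$, which in the converse direction is not yet guaranteed by Proposition \ref{WP1} --- is a genuine point that the paper's own proof passes over silently, so your bookkeeping remark is warranted rather than a defect of your argument.
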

\begin{proof}
Let $M=N^{T}\times _{f}N^{\theta}$ be a pointwise semi-slant warped product submanifold of an $LCK$-manifold $\tilde{M}$. Then, by Theorem \ref{WT2}(i), we have $g(A_{FZ}{JX}, Y)=0$ for any $X, Y\in\Gamma(TN^T)$ and $Z\in\Gamma(TN^\theta)$, i.e., $A_{FZ}{\ J X}$ has no components in $TN^T$. Also, if we interchange $Z$ by $PZ$ in Theorem \ref{WT2}(i), then we get $g(A_{FPZ}{X}, Y)=0$, i.e., $A_{FPZ}{X}$ also has no components $TN^T$. Therefore, $A_{FZ}{\ J X}-A_{FPZ}{X}$ lies in $TN^\theta$ only. Applying this fact together with Theorem \ref{WT2}(ii), we obtain (\ref{3.8}) with $\mu=\ln f$ and $\alpha(X)=g(\lambda, X)$.

Conversely, suppose that $M$ is a proper pointwise  semi-slant submanifold of an $LCK$-manifold $\tilde{M}$ such that (\ref{3.8}) holds. Then it follows from Lemma \ref{L3.1} and the given condition (\ref{3.8}) that $\sin^2\theta\,g(\nabla_YX, Z)=0$ for $X, Y\in\Gamma({\mathfrak{D}})$ and $Z\in\Gamma({\mathfrak{D}}^\theta)$. Since $M$ is a proper pointwise semi-slant submanifold, $g(\nabla_YX, Z)=0$ holds. Therefore, the leaves of the distribution $\mathfrak{D}$ are totally geodesic in $M$.
 
On the other hand, it follows from condition (\ref{3.8}) and Lemma \ref{L3.4} that $\sin^2\theta\,g([Z, W], X)=0$ holds for any $X\in\Gamma({\mathfrak{D}})$ and $Z, W\in\Gamma({\mathfrak{D}}^\theta)$. Since $M$ is a proper pointwise semi-slant submanifold, we find $g([Z, W], X)=0$. Thus, the slant distribution ${\mathfrak{D}}^\theta$ is integrable. 

Now, let us consider the second fundamental form $h^\theta$ of a leaf $N^\theta$ of ${\mathfrak{D}}^\theta$ in $M$. Then, for any $Z, W\in\Gamma({\mathfrak{D}}^\theta)$ and $X\in\Gamma({\mathfrak{D}})$, we have
\begin{align*}
g(h^\theta(Z, W), X)=g(\nabla_ZW, X)=g(\tilde\nabla_ZW, X)=g(J\tilde\nabla_ZW, J X).
\end{align*}
Using the covariant derivative property of $J$ and the LCK-structure equation, we have
\begin{align*}
g(h^\theta(Z, W), X)&=g(\tilde\nabla_ZPW,J X)+g(\tilde\nabla_ZFW,J X)-g((\tilde\nabla_ZJ)W,J X)\\
&=g(\tilde\nabla_ZPW,J X)+g(\tilde\nabla_ZFW,J X)-g(JZ,W)g(\lambda,JX)\\
&\;\;\;-g(Z,W)g(\lambda,X).
\end{align*}
Again, by using the covariant derivative property of $J$, we find 
\begin{align*}
g(h^\theta(Z, W), X)&=-g(\tilde\nabla_ZJ PW, X)+g((\tilde\nabla_ZJ)PW, X)-g(A_{FW}Z, J X)\notag\\
&\;\;\;-g(PZ,W)g(\lambda,JX)-g(Z,W)g(\lambda,X).
\end{align*}
Therefore, by (\ref{2.6}) and (\ref{2.2}), we derive that
\begin{align*}
g(h^\theta(Z, W), X)&=-g(\tilde\nabla_ZP^2W, X)-g(\tilde\nabla_ZFPW, X)+g(PZ,PW)g(\lambda,X)\notag\\
&\;\;\;+g(Z,PW)g(J\lambda,X)-g(A_{FW}J X, Z)-g(PZ,W)g(\lambda,JX)\\
&\;\;\;-g(Z,W)g(\lambda,X).
\end{align*}
Then, from \eqref{2.8}  and \eqref{2.9}, we obtain
\begin{align*}
g(h^\theta(Z, W), X)&=\cos^2\theta\,g(\tilde\nabla_ZW, X)+\sin2\theta \,Z(\theta)g(W, X) +g(A_{FPW}Z, X)\notag\\
&\;\;\;\;+\cos^2\theta g(\lambda,X)g(Z, W)-g(A_{FW}J X, Z)-(\lambda,X)g(Z, W)\notag\\
&=\cos^2\theta\,g(\nabla_ZW, X)+g(A_{FPW}X-A_{FW}J X,Z)\notag\\
&\;\;\;\;-\sin^2{\theta}g(\lambda,X)g(Z,W).
\end{align*}
From the condition (\ref{3.8}), we find
$\sin^2\theta\,g(h^\theta(Z, W), X)=-\sin^2\theta\,X(\mu)g(Z,W).$ Hence,  we conclude from the definition of gradient that
$h^\theta(Z, W)=-\vec\nabla\mu g(Z,W)$,
which implies that $N^\theta$ is totally umbilical in $M$ with the mean curvature vector given by $H^\theta=-\vec\nabla\mu$. It is easy to see that the mean curvature vector $H^\theta$ is parallel to the normal connection $D^\#$ of $N^{\theta}$ in $M$. For this, let us  consider any $Y\in\Gamma(\mathfrak{D})$ and $Z\in\Gamma(\mathfrak{D}^{\theta})$, we find that 
\begin{align*}
g(D^\# _{Z}\vec\nabla\mu ,Y)&=g(\nabla _{Z}\vec\nabla\mu ,Y)
=Zg(\vec\nabla\mu ,Y)-g(\vec\nabla\mu ,\nabla _{Z}Y)\notag\\
&=Z(Y(\mu))-g(\vec\nabla\mu , [Z, Y])-g(\vec\nabla\mu ,\nabla _{Y}Z)\notag\\
&=Z(Y(\mu))-g(\vec\nabla\mu,ZY)+g(\vec\nabla\mu,YZ)+g(\nabla _{Y}\vec\nabla\mu ,Z)\notag\\
&=Z(Y(\mu))-Z(Y(\mu))+Y(Z(\mu))=0,\notag
\end{align*}
 since $Z(\mu )=0,\,\text{for all}\,Z\in \mathfrak{D}^{\theta}$ and thus $\nabla _{Y}\vec\nabla\mu\in \mathfrak{D}$. This means that the mean curvature of $N^{\theta }$ is parallel. Thus the leaves of $\mathfrak{D}^{\theta }$ are totally umbilical with parallel mean curvature. Consequently, the
spherical condition is satisfied.  Therefore, it follows from Theorem \ref{HT} that $M$ is a warped product submanifold $N^T\times_fN^\theta$ with warping function $\mu$. Hence the proof is complete.
\end{proof}

\section{Chen type inequality for pointwise semi-slant warped products}\label{Sec6}

In this section, we provide a sharp estimation for the length of the second fundamental form $h$ of an $m$-dimensional proper pointwise semi-slant warped product $M=N^T\times_fN^\theta$ in a $2k$-dimensional $LCK$-manifold $\tilde M$ such that the Lee vector field $\alpha^{\#}$ is tangent to $N^{T}$. Assume $\dim\; N^T=2p$ and $\dim\; N^\theta=2q$ so that we have $m=2p+2q$. 

Consider an orthonormal frame field $\{e_1,\cdots, e_p, e_{p+1}=J e_1,\cdots, e_{2p}=Je_p\}$ of  distribution $\mathfrak{D}$ and an orthonormal frame $\{e_{2p+1}=e_1^{\star},\cdots, e_{2p+q}=e_{q}^{\star}, e_{2p+1+q}=e_{q+1}^{\star}=\sec\theta Pe_{1}^{\star},\cdots, e_{m}=e_{2q}^{\star}=\sec\theta Pe_{q}^{\star}\}$  of the distribution $\mathfrak{D}^\theta$ on $M$.
 Then \begin{align*}&\{e_{m+1}=\tilde e_1=\csc\theta Fe_1^\star\cdots, e_{m+q}=\tilde e_q=\csc\theta Fe_q^\star, \\&\hskip.3in  e_{m+q+1}=\tilde e_{q+1}=\csc\theta\sec\theta FPe_1^\star,\cdots, e_{m+2q}=\tilde e_{2q}=\csc\theta\sec\theta FPe_q^\star\}\end{align*} is an orthonormal frame of  $F\mathfrak{D}^\theta$. Let $\{e_{m+2q+1}=\tilde e_{2q+1},\cdots, e_{2k}=\tilde e_{2k-m-2q}\}$ be an orthonormal frames of the invariant normal subbundle $\nu$ of $T^\perp M$ (cf. \eqref{nu}).
 
In the following, we will use the above frame fields to obtain a sharp estimation for the squared norm $\|h\|^2$ of the second fundamental form $h$ in terms of the gradient of the warping function $f$, Lee vector field $\alpha^{\#}$, and the slant function $\theta$ of the proper pointwise semi-slant warped product submanifold $M$ in the $LCK$-manifold $\tilde M$.
For simplicity, we denote  the tangential component of the Lee vector field $\alpha^{\#}$ of $\tilde M$ on the pointwise semi-slant warped product $M=N^T\times_fN^\theta$  by $\alpha^{\#}_{TM}$.

\begin{theorem}\label{IT1}
Let $M=N^T\times_fN^\theta$ be a pointwise semi-slant warped product submanifold of a locally conformal Kaehler manifold $\tilde M$ such that the Lee vector field  is tangent to $M$, where $N^T$ and $N^\theta$ are holomorphic and proper pointwise slant submanifolds of $\tilde M$, respectively. Then
\begin{enumerate}
\item [(i)] The squared norm of the second fundamental form $h$ of $M$ satisfies
 \begin{align} \label{ineq1}
\|h\|^2\geq 4q\left(\csc^2\theta+\cot^2\theta\right)\Big\{\|\vec\nabla(\ln f)\|^2+\Vert\alpha^{\#}_{TM} \Vert^{2}
-2G^{*}\Big\}, \end{align}
 where $G^{*}=\sum_{i=1}^{2p}g(\vec\nabla(\ln f), e_i)g(\alpha^{\#},e_i)$, $\Vert\alpha^{\#}_{TM} \Vert^{2}$ is the squared norm of $\alpha^{\#}_{TM}$, $q=\frac{1}{2}\dim\, N^T,\,p=\frac{1}{2}\dim\, N^\theta$,   and $\vec\nabla(\ln f)$ is the gradient of $\, \ln f$.
\item [(ii)] If the equality sign of \eqref{ineq1} holds identically, then $N^T$ is totally geodesic and $N^\theta$ is totally umbilical in $\tilde M$. Furthermore, $M$ is  minimal in $\tilde M$.
\end{enumerate}
\end{theorem}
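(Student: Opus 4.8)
The plan is to expand the squared norm of the second fundamental form relative to the adapted orthonormal frame constructed just before the theorem, and then identify which components are controlled by the lemmas of Section~\ref{Sec4}. Writing $\|h\|^2 = \sum_{A,B} g(h(e_A,e_B),h(e_A,e_B))$ and decomposing the normal bundle as $T^{\perp}M = F\mathfrak{D}^{\theta}\oplus\nu$ via \eqref{nu}, I would split the sum into three groups according to whether both arguments lie in $\mathfrak{D}$, both in $\mathfrak{D}^{\theta}$, or one in each (the mixed terms). By Lemma~\ref{WL2}(i) we already know $g(h(X,Y),FZ)=0$ for $X,Y\in\Gamma(\mathfrak{D})$, so the $\mathfrak{D}\times\mathfrak{D}$ block contributes nothing to the $F\mathfrak{D}^{\theta}$-components; discarding the purely $\nu$-valued pieces (which are nonnegative) and the $\mathfrak{D}^{\theta}\times\mathfrak{D}^{\theta}$ block gives the first inequality, reducing the problem to bounding the mixed terms $\sum g(h(e_i,e_r^{\star}),\tilde e_s)^2$ from below by the right-hand side.

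The core computation is then to evaluate the mixed components $g(h(X,Z),FW)$ for the specific frame vectors using Lemma~\ref{WL2}(ii) together with the interchange relations \eqref{S3}--\eqref{3.6}. Here I would substitute $Z=e_r^{\star}$, $W=e_s^{\star}$ and their $P$-images $\sec\theta\,Pe_r^{\star}$, exploiting \eqref{2.9} so that $g(Pe_r^{\star},Pe_s^{\star})=\cos^2\theta\,\delta_{rs}$ and the normalizing factors $\csc\theta,\sec\theta$ in $\tilde e_s$ convert each raw coefficient into the form $[g(\lambda,JX)-JX(\ln f)]$ or $[g(\lambda,X)-X(\ln f)]$. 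Summing the squares over the holomorphic frame $\{e_1,\dots,e_p,Je_1,\dots,Je_p\}$ and pairing each $X(\ln f)-\alpha(X)$ with its $JX$-counterpart should produce $\|\vec\nabla(\ln f)\|^2+\|\alpha^{\#}_{TM}\|^2-2G^{*}$, with the factor $4q(\csc^2\theta+\cot^2\theta)$ arising from the combined $\csc^2\theta$ and $\csc^2\theta\sec^2\theta\cos^2\theta=\cot^2\theta$ weights across the two halves of the $F\mathfrak{D}^{\theta}$ frame and the $2q$ slant directions. The bookkeeping of these coefficients is where I expect the main obstacle to lie: one must be careful that cross terms between the $g(Z,W)$ and $g(Z,PW)$ pieces cancel correctly and that no mixed component is double-counted when $e_r^{\star}$ and its partner $\sec\theta\,Pe_r^{\star}$ are both summed.

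For the equality case in part~(ii), I would trace back exactly which nonnegative terms were dropped to obtain \eqref{ineq1}. Equality forces all discarded components to vanish: the $\mathfrak{D}\times\mathfrak{D}$ block must satisfy $h(\mathfrak{D},\mathfrak{D})=0$, the $\mathfrak{D}^{\theta}\times\mathfrak{D}^{\theta}$ block must have no $F\mathfrak{D}^{\theta}$-component and no $\nu$-component beyond what umbilicity permits, and the $\nu$-valued parts of the mixed terms must vanish so that $h(\mathfrak{D},\mathfrak{D}^{\theta})\subset F\mathfrak{D}^{\theta}$. Invoking the Remark after Lemma~\ref{WL1} (that $N^T$ is totally geodesic and $N^{\theta}$ totally umbilical \emph{in} $M$) and combining with $h(\mathfrak{D},\mathfrak{D})=0$, one upgrades total geodesy of $N^T$ from ambient $M$ to ambient $\tilde M$, and similarly the umbilicity of $N^{\theta}$ in $\tilde M$; the vanishing of all the surviving trace contributions then forces $H=0$, i.e.\ $M$ is minimal in $\tilde M$. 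The delicate point in this half is confirming that the various vanishing conditions are mutually consistent and genuinely exhaust the equality constraints, rather than over- or under-determining $h$.
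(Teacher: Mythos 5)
Your proposal follows essentially the same route as the paper's proof: the same $F\mathfrak{D}^{\theta}\oplus\nu$ decomposition of $\|h\|^{2}$ in the adapted frame, discarding the $\nu$-valued and $\mathfrak{D}^{\theta}\times\mathfrak{D}^{\theta}$ blocks, killing the $\mathfrak{D}\times\mathfrak{D}$ block via Lemma~\ref{WL2}(i), evaluating the mixed terms with Lemma~\ref{WL2}(ii) and \eqref{S3}--\eqref{3.6} to produce the coefficient $4q(\csc^{2}\theta+\cot^{2}\theta)$, and tracing the dropped terms for the equality case exactly as the paper does. The plan is correct.
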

\begin{proof} From the definition of $h$, we have
\begin{align*}
\|h\|^2=\sum_{i, j=1}^mg(h(e_i, e_j), h(e_i, e_j))=\sum_{r=m+1}^{2k}\sum_{i, j=1}^{m} g(h(e_i, e_j), e_r)^2.
\end{align*}
Decompose the above relation for $F\mathfrak{D}^\theta$- and $\nu$-components as follows
\begin{align} \label{ineq2}
\|h\|^2=\sum_{r=1}^{2q}\sum_{i, j=1}^{m} g(h(e_i, e_j), \tilde e_r)^2+\sum_{r=2q+1}^{2k-m-2q}\sum_{i, j=1}^{m}g(h(e_i, e_j), \tilde e_r)^2.
\end{align}
By computing $F\mathfrak{D}^\theta$-components terms and leaving $\nu$-components positive terms, we find
\begin{equation}\begin{aligned} \label{ineq3}
\|h\|^2&\geq\sum_{r=1}^{2q} \sum_{i, j=1}^{2p} g(h(e_i, e_j), \tilde
e_r)^2+2\sum_{r=1}^{2q}\sum_{i=1}^{2p}\sum_{ j=1}^{2q}
g(h(e_i, e_j^\star), \tilde e_r)^2\\
&+\sum_{r=1}^{2q} \sum_{i, j=1}^{2q} g(h(e_i^\star, e_j^\star),
\tilde e_r)^2.
\end{aligned}\end{equation}
In views of Lemma \ref{WL2}(i), the first term in the right hand side of \eqref{ineq3}
is identically zero and there is no relation for the last term $g(h(\mathfrak{D}^{\theta},\mathfrak{D}^{\theta}),F\mathfrak{D}^\theta)$. Thus compute just the second term of \eqref{ineq3}
\begin{align*}
\|h\|^2\geq 2\sum_{r=1}^{2q}\sum_{i=1}^{2p}\sum_{j=1}^{2q}g(h(e_i, e_j^\star), \tilde e_r)^2.
\end{align*}
It follows from the frame fields of $\mathfrak{D}$, $\mathfrak{D}^\theta$ and $F\mathfrak{D}^\theta$ chosen early in this section that
\begin{align*} &\hskip-.45in
\|h\|^2\geq  2\csc^2\theta\sum_{i=1}^{p}\sum_{r, j=1}^{q}g(h(e_i,
e_j^\star), Fe_r^\star)^2\notag\\
&+2\csc^2\theta\sec^4\theta\sum_{i=1}^{p}\sum_{r, j=1}^{q}g(h(e_i, Te_j^\star), FTe_r^\star)^2\notag
\\&+2\csc^2\theta\sum_{i=1}^{p}\sum_{r, j=1}^{q}g(h(J e_i, e_j^\star),
Fe_r^\star)^2\notag
\\&+2\csc^2\theta\sec^4\theta\sum_{i=1}^{p}\sum_{r, j=1}^{q}g(h( J e_i, Te_j^\star), FTe_r^\star)^2\notag
\\&+2\csc^2\theta\sec^2\theta\sum_{i=1}^{p}\sum_{r, j=1}^{q}g(h(e_i, Te_j^\star), Fe_r^\star)^2\notag\\
&+2\csc^2\theta\sec^2\theta\sum_{i=1}^{p}\sum_{r, j=1}^{q}g(h(e_i, e_j^\star), FTe_r^\star)^2\notag
\\&+2\csc^2\theta\sec^2\theta\sum_{i=1}^{p}\sum_{r, j=1}^{q}g(h(
J e_i, Te_j^\star), Fe_r^\star)^2\notag
\\&+2\csc^2\theta\sec^2\theta\sum_{i=1}^{p}\sum_{r, j=1}^{q}g(h(J e_i, e_j^\star), FTe_r^\star)^2.\notag
\end{align*}
Thus, we derive from Lemma \ref{WL2}(ii) and the relations \eqref{S3}-\eqref{3.6} that
\begin{align*}
\|h\|^2&\geq 4q\left(\csc^2\theta+\cot^2\theta\right)\sum\limits_{i=1}\limits^{p}\left(g(\lambda,Je_i )-Je_i(\ln f)\right)^2\notag\\
&\;\;\;+4q\left(\csc^2\theta+\cot^2\theta\right)\sum\limits_{i=1}\limits^{p}\left(g(\lambda, e_i )-e_i(\ln f)\right)^2\\
&= 4q\left(\csc^2\theta+\cot^2\theta\right)\Bigg\{\|\vec\nabla\ln f\|^2+\sum\limits_{i=1}\limits^{p}\left[ g(\alpha^{\#},e_i )^{2}+g(\alpha^{\#},Je_i )^{2}\right]\notag\\
&\;\;\;-2 \sum\limits_{i=1}\limits^{p}\left[(e_i\ln f)g(\alpha^{\#},e_i)+(Je_i\ln f)g(\alpha^{\#},Je_i)\right]\Bigg\}\\
&=4q\left(\csc^2\theta+\cot^2\theta\right)\Bigg\{\|\vec\nabla\ln f\|^2+\sum\limits_{i=1}\limits^{2p}\left[  g(\alpha^{\#},e_i )^{2}-2(e_i\ln f)g(\alpha^{\#},e_i)\right]\Bigg\}.
 \end{align*}
Since $\alpha^{\#} $ is in $\mathfrak{D}$ and orthogonal to $\mathfrak{D}^\theta$, the the above inequality takes the form  
\begin{align*}
\|h\|^2\geq 4q\left(\csc^2\theta+\cot^2\theta\right)\Bigg\{ \|\vec\nabla\ln f\|^2+\|\alpha^{\#}_{TM}\|^2 -2\sum\limits_{i=1}\limits^{2p}(e_i\ln f)g(\alpha^{\#},e_i)\Bigg\}\end{align*}
which is the inequality \eqref{ineq1}. 
If the equality sign of inequality \eqref{ineq1} holds, then it follows from the leaving $\nu$-components term in \eqref{ineq2} that
\begin{align}\label{eq1}
h(X, Y)\perp\nu,\;\;\;\forall\;X, Y\in\Gamma(TM).
\end{align}
Similarly, from the vanishing first term and the leaving third term in the right-hand-side of \eqref{ineq3}, we get 
\begin{align}\label{eq2}
h(\mathfrak{D},\mathfrak{D})\perp F\mathfrak{D}^\theta,\;\;\;h(\mathfrak{D}^\theta, \mathfrak{D}^\theta)\perp F\mathfrak{D}^\theta.
\end{align}
From \eqref{eq1} and  \eqref{eq2}, we conclude that 
\begin{align}\label{eq3}
h(\mathfrak{D},\mathfrak{D})=\{0\},\;\; h(\mathfrak{D}^\theta, \mathfrak{D}^\theta)=\{0\}\;\; {\mbox{and}}\;\; h(\mathfrak{D}, \mathfrak{D}^
\theta)\subset F\mathfrak{D}^\theta.
\end{align}
If $h^\theta$ denotes the second fundamental form of $N^\theta$ in $M$, then 
\begin{align}\label{eq4}
g(h^\theta(Z, W), X)=g(\nabla_ZW, X)=-(X(\ln f))\,g(Z, W)
\end{align}
for any  $X\in\Gamma({\mathfrak{D}})$ and $Z, W\in\Gamma({\mathfrak{D}}^\theta)$. 

Since $N^T$ is totally geodesic in $M$ \cite{Bi,book,book17}, using this fact together with \eqref{eq3}, we know that $N^T$ is totally geodesic in $\tilde M$. Also, since $N^\theta$ is totally umbilical in $M$ (cf. \cite{Bi,book}), it follows from \eqref{eq4} with this fact that $N^\theta$ is totally umbilical in $\tilde M$. Therefore, applying all conditions in \eqref{eq3} together with these facts, we conclude that $M$ is minimal in $\tilde M$. This completes the proof of the theorem.
\end{proof}

\section{Some Applications}\label{Sec7}

In this section, we provide some applications of our previous results.

If $\theta$ is constant in Theorem \ref{TC1}, then the warped product would be semi-slant warped product in a $LCK$-manifold. Such submanifolds have been studied by H. M. Tastan, S. G. Aydin in  \cite{HSP}. 

For semi-slant warped product submanifold of an $LCK$-manifold $\tilde M$,  Theorem \ref{TC1} gives the following.

\begin{theorem}
If $N^T\times_fN^\theta$ is a semi-slant warped product submanifold of an $LCK$-manifold $\tilde M$, then $X(\ln f)=\alpha(X)$ for any $X\in\Gamma(\mathfrak{D})$.
\end{theorem}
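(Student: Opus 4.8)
The plan is to read the statement as the constant-slant specialization of the characterization obtained in Theorem \ref{TC1}. By definition, a semi-slant warped product is precisely a pointwise semi-slant warped product $N^T\times_f N^\theta$ whose slant function $\theta$ is constant, which is exactly the remark preceding this theorem. All the structural hypotheses of Theorem \ref{TC1} are therefore inherited in the present setting, so the identity \eqref{SS},
\[
X(\ln f)=\alpha(X)+\tan\theta\,X(\theta),\qquad X\in\Gamma(\mathfrak{D}),
\]
is available verbatim.

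The single substantive step is to observe that constancy of the slant function forces $X(\theta)=0$ for every vector field $X$, in particular for every $X\in\Gamma(\mathfrak{D})$. Substituting this into \eqref{SS} annihilates the term $\tan\theta\,X(\theta)$, and what remains is exactly $X(\ln f)=\alpha(X)$, which is the assertion. No further computation is required.

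Honestly, there is no genuine obstacle here: the result is an immediate corollary of Theorem \ref{TC1}, and the only point deserving any care is the bookkeeping of hypotheses. Specifically, one should confirm that the semi-slant situation does qualify as a \emph{proper} pointwise semi-slant warped product — so that the factor $\sin^2\theta$ used throughout the derivation of \eqref{SS} is nonzero and the invariant distribution $\mathfrak{D}$ is nontrivial — and that the Lee vector field $\alpha^\#$ is tangent to $M$, both of which are the standing assumptions of Theorem \ref{TC1}. Once these are in force, the proof is a one-line specialization.
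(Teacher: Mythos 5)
Your proposal is correct and follows exactly the paper's own route: the paper derives this theorem by specializing Theorem \ref{TC1} to a constant slant function, so that $X(\theta)=0$ kills the $\tan\theta\,X(\theta)$ term in \eqref{SS}. Your additional remarks on verifying properness and the tangency of the Lee vector field are sensible bookkeeping but do not change the argument.
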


When $\tilde M$ is Kaehlerian, i.e. $\alpha^{\#}=0$, Theorem \ref{TC1} also implies the following.

\begin{theorem}\label{T:7.2}
There do not exist any proper semi-slant warped product submanifold of the form $M=N^T\times_fN^\theta$ in a Kaehler manifold $\tilde M$.
\end{theorem}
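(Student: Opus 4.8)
The plan is to obtain the non-existence as a direct specialization of Theorem \ref{TC1}. First I would note that a \emph{semi-slant} warped product is exactly a pointwise semi-slant warped product $N^T\times_f N^\theta$ whose slant function $\theta$ is constant, so that $X(\theta)=0$ for every $X\in\Gamma(\mathfrak D)$. Since the ambient manifold $\tilde M$ is Kaehlerian, its Lee form vanishes identically, $\alpha\equiv 0$; in particular the Lee vector field $\alpha^{\#}=0$ is trivially tangent to $M$, so the hypotheses required to invoke Theorem \ref{TC1} are met.

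Next I would substitute $\alpha(X)=0$ and $X(\theta)=0$ into the characterization
$$X(\ln f)=\alpha(X)+\tan\theta\,X(\theta)$$
furnished by Theorem \ref{TC1}. This immediately gives $X(\ln f)=0$ for all $X\in\Gamma(\mathfrak D)$. Because the warping function $f$ is a function on the base factor $N^T$, whose tangent bundle is precisely $\mathfrak D$, the gradient $\vec\nabla(\ln f)$ lies in $\mathfrak D$; hence the condition $X(\ln f)=0$ for every $X\in\Gamma(\mathfrak D)$ forces $\vec\nabla(\ln f)=0$, so that $\ln f$, and therefore $f$, is constant on $N^T$.

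Finally I would invoke the definition of triviality recorded in Section \ref{Sec4}: a warped product is trivial precisely when its warping function is constant. Having shown $f$ must be constant, the warped product $N^T\times_f N^\theta$ degenerates into a Riemannian product and so cannot be a genuine (proper) semi-slant warped product. This contradiction establishes the asserted non-existence.

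The argument is brief because all the analytic content has already been absorbed into Theorem \ref{TC1}; the only step demanding a little care is the justification that $\vec\nabla(\ln f)\in\mathfrak D$, which rests on the structural fact that in $N^T\times_f N^\theta$ the warping function depends only on the base factor $N^T$. Beyond this bookkeeping no genuine obstacle arises.
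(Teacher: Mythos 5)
Your proposal is correct and follows exactly the paper's route: the paper derives Theorem \ref{T:7.2} by specializing Theorem \ref{TC1} to constant $\theta$ (so $X(\theta)=0$) and $\alpha^{\#}=0$, obtaining $X(\ln f)=0$ on $\mathfrak D$ and hence a constant warping function. Your added remark that $\vec\nabla(\ln f)\in\mathfrak D$ because $f$ lives on the base $N^T$ is the right justification for the final step.
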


Theorem \ref{T:7.2} is the main result (Theorem 3.2) of \cite{Sahin06}. Therefore, Theorem \ref{TC1} also generalizes Theorem 3.2 of \cite{Sahin06}.

If $\theta={\pi}/{2}$ in Theorem \ref{TC2}, then $M$ is a CR-submanifold of an $LCK$-manifold. In this case, Theorem \ref{TC2} implies the following.

\begin{theorem}\label{T:7.3}
A CR-submanifold of an $LCK$-manifold $\tilde M$ with the Lee vector field $\lambda=\alpha^{\#}$ tangent to $M$ is a CR-warped product if and only if the Lee vector field is orthogonal to $\mathfrak{D}^\bot$ and the shape operator $A$ satisfies
\begin{align}\label{AVA10}
A_{JZ}X =-\left(g(J\lambda, X)+JX(\mu)\right)Z,
\end{align}
for each $X\in\Gamma(\mathfrak{D}),\;Z \in\Gamma(\mathfrak{D}^\bot)$, and for smooth function $\mu$ on $M$ with $W(\mu) = 0$, for each $W \in\Gamma(\mathfrak{D}^\bot)$.\end{theorem}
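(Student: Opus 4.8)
The plan is to derive Theorem \ref{T:7.3} as the specialization $\theta=\pi/2$ of the characterization Theorem \ref{TC2}. When the slant function is identically $\pi/2$ the pointwise slant distribution $\mathfrak{D}^\theta$ becomes the totally real distribution $\mathfrak{D}^\bot$, so that $\cos\theta=0$ and, by \eqref{2.8}, $P$ vanishes on $\mathfrak{D}^\bot$; consequently $PZ=0$, $FZ=JZ$, $FPZ=0$ and $\sin^2\theta=1$ for every $Z\in\Gamma(\mathfrak{D}^\bot)$. I would also record the Lee-form identity that converts between the two conditions: since $\beta^\#=J\lambda$ and $\beta(X)=-\alpha(JX)$, one has $g(J\lambda,X)=\beta(X)=-\alpha(JX)$, and therefore, $J$ being an isometry, $g(J\lambda,JX)=g(\lambda,X)=\alpha(X)$ for all $X\in\Gamma(\mathfrak{D})$.

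For the forward implication, suppose $M$ is a CR-warped product $N^T\times_f N^\bot$. Proposition \ref{WP1} immediately gives that $\lambda$ is orthogonal to $\mathfrak{D}^\bot$. Feeding the simplifications above into Theorem \ref{TC2} collapses condition \eqref{3.8} to $A_{JZ}JX=\left(X(\mu)-\alpha(X)\right)Z$ with $\mu=\ln f$. Since $\mathfrak{D}$ is $J$-invariant I would replace $X$ by $JX$; using $J^2=-I$ this yields $A_{JZ}X=\left(\alpha(JX)-JX(\mu)\right)Z$, and substituting $\alpha(JX)=-g(J\lambda,X)$ from the identity above produces exactly \eqref{AVA10}.

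For the converse, assume $\lambda\perp\mathfrak{D}^\bot$ and \eqref{AVA10} holds. Reversing the computation—replacing $X$ by $JX$ in \eqref{AVA10} and using $g(J\lambda,JX)=\alpha(X)$—recovers $A_{JZ}JX=\left(X(\mu)-\alpha(X)\right)Z$, which is precisely \eqref{3.8} for $\theta=\pi/2$ (as $A_{FPZ}X=0$ and $\sin^2\theta=1$), so Theorem \ref{TC2} delivers the CR-warped product structure. The step needing care is verifying that the hypothesis $\lambda\perp\mathfrak{D}^\bot$ genuinely licenses the application of Theorem \ref{TC2}: in Lemma \ref{L3.1} the term $g(\beta^\#,FZ)$ must vanish, and with $FZ=JZ$ this equals $g(J\lambda,JZ)=g(\lambda,Z)$, which is zero exactly because of the orthogonality assumption (the companion term $g(\alpha^\#,FZ)$ vanishes automatically since $\alpha^\#$ is tangent while $FZ$ is normal). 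Thus the main obstacle is bookkeeping rather than depth: tracking the $X\leftrightarrow JX$ substitution together with the Lee-form identity $\alpha(JX)=-g(J\lambda,X)$, and confirming that the orthogonality condition is exactly the ingredient making the $\beta^\#$-term of Lemma \ref{L3.1} disappear, so that Theorem \ref{TC2} applies verbatim.
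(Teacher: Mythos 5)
Your proof is correct and follows essentially the same route as the paper, which obtains Theorem \ref{T:7.3} simply as the specialization $\theta=\pi/2$ of Theorem \ref{TC2} (with Proposition \ref{WP1} supplying the orthogonality of the Lee field in the forward direction). Your $X\leftrightarrow JX$ substitution together with the identity $\alpha(JX)=-g(J\lambda,X)$ correctly recovers \eqref{AVA10}, and your observation that the orthogonality hypothesis is precisely what annihilates the $\beta^{\#}$-term in Lemma \ref{L3.1} makes explicit a step the paper leaves implicit.
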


Theorem \ref{T:7.3} is the main result (Theorem 3.5) of \cite{Khan10}. Hence Theorem \ref{TC2} also generalizes the main result of \cite{Khan10}.

If we put $ \alpha^{\#}=0$ and $\theta =\dfrac{\pi}{2}$ in Theorem \ref{TC2},  the submanifold $M$ in Theorem \ref{TC2} is a CR-submanifold of a Kaehler manifold. Such submanifolds have been studied in \cite{C01a}. A characterization theorem for such manifolds is given by following.

\begin{theorem}\label{T:7.4}
A proper CR-submanifold $M$ of a Kaehler manifold $\tilde M$ is locally a CR-warped product if and only if its shape operator $A$ satisfies
\begin{align}\label{AC0}
A_{JZ}X = -JX(\mu)Z,\;X\in\Gamma(\mathfrak{D}),\;Z \in\Gamma(\mathfrak{D}^\bot),
 \end{align}
for some function $\mu$ on $M$ satisfying $W(\mu)=0,\;\forall\;W \in\Gamma(\mathfrak{D}^\bot)$, where $\mathfrak{D}$ and $\mathfrak{D}^\bot$ are the holomorphic and totally real distributions of $M$, respectively.
\end{theorem}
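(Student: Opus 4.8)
The plan is to derive Theorem \ref{T:7.4} as the Kaehler, totally-real specialization of the general characterization in Theorem \ref{TC2}. The key observation is that a CR-submanifold is precisely a pointwise semi-slant submanifold whose slant function is the constant $\theta=\pi/2$, with $\mathfrak{D}^\theta=\mathfrak{D}^\bot$ the totally real distribution; and a Kaehler manifold is an $LCK$-manifold whose Lee form $\alpha$ vanishes identically, so $\alpha^{\#}=\lambda=0$. I would therefore begin by recording these two reductions explicitly, noting that $\theta=\pi/2$ gives $\sin^2\theta=1$ and $\cos^2\theta=0$, whence by \eqref{2.8} we have $P^2=0$, so $PZ=0$ and $JZ=FZ$ for every $Z\in\Gamma(\mathfrak{D}^\bot)$.

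Next I would substitute these values into the characterizing condition \eqref{3.8} of Theorem \ref{TC2}. With $\alpha(X)=g(\lambda,X)=0$, the term $\alpha(X)$ on the right-hand side drops out, and $\sin^2\theta=1$. On the left-hand side, $FZ=JZ$ and $PZ=0$, so the operator $A_{FPZ}$ becomes $A_{F(0)}=A_0=0$ and $A_{FZ}$ becomes $A_{JZ}$. Thus \eqref{3.8} collapses exactly to $A_{JZ}JX=X(\mu)Z$. The remaining task is to rewrite this in the form \eqref{AC0}, namely $A_{JZ}X=-JX(\mu)Z$. I would obtain this by replacing $X$ with $JX$ throughout: since $\mathfrak{D}$ is $J$-invariant, $JX\in\Gamma(\mathfrak{D})$ is an admissible substitution, and $J(JX)=-X$ on the holomorphic distribution gives $A_{JZ}(-X)=(JX)(\mu)Z$, i.e.\ $A_{JZ}X=-JX(\mu)Z$, which is precisely \eqref{AC0}. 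The condition $W(\mu)=0$ for $W\in\Gamma(\mathfrak{D}^\bot)$ carries over verbatim from Theorem \ref{TC2}.

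For the main theorem's logical structure I would treat both directions simultaneously, since Theorem \ref{TC2} is itself an ``if and only if'' and all the substitutions above are reversible equivalences: condition \eqref{AC0} holds iff the specialized \eqref{3.8} holds iff (by Theorem \ref{TC2}) $M$ is locally a warped product $N^T\times_f N^\bot$, which in the $\theta=\pi/2$ setting is exactly a CR-warped product. I would also remark that in the Kaehler case the conclusion of Proposition \ref{WP1} that $\lambda\perp\mathfrak{D}^\theta$ is automatic since $\lambda=0$, so no separate orthogonality hypothesis is needed here, in contrast to Theorem \ref{T:7.3}.

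The step I expect to require the most care is the index bookkeeping in the substitution $X\mapsto JX$ together with the sign produced by $J^2=-I$; this is where the apparent discrepancy between the $+X(\mu)$ of \eqref{3.8} and the $-JX(\mu)$ of \eqref{AC0} is resolved, and it must be verified that the substitution stays inside $\Gamma(\mathfrak{D})$ and that no slant-function derivative terms survive (they cannot, since $\theta$ is constant, so $X(\theta)=0$ and the characterization \eqref{3.8} already absorbs this). Beyond that bookkeeping, the argument is a direct specialization and involves no new geometric input.
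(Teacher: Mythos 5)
Your proposal matches the paper's own derivation: the paper obtains Theorem \ref{T:7.4} precisely by putting $\alpha^{\#}=0$ and $\theta=\pi/2$ in Theorem \ref{TC2}, and your substitution $X\mapsto JX$ with $J^{2}=-I$ correctly reconciles the specialized condition $A_{JZ}JX=X(\mu)Z$ with the stated form \eqref{AC0}. The only cosmetic point is that $PZ=0$ should be deduced from \eqref{2.9} (which gives $\|PZ\|^{2}=\cos^{2}\theta\,\|Z\|^{2}=0$) rather than from $P^{2}=0$ alone.
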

In fact, this theorem is Theorem 4.2 of \cite{C01a}. 
Further, if $\alpha^{\#}=0$ and $\theta$ is a slant function in Theorem \ref{TC2}, then  Theorem \ref{TC2} becomes the following characterization theorem (Theorem 5.1) of  \cite{Sahin13}.

\begin{theorem}\label{T:7.5}
Let $M$ be a pointwise semi-slant submanifold of a Kaehler manifold $\tilde M$. Then $M$ is locally a non-trivial warped product manifold of the
form $M = N^T\times_fN^\theta$ such that $N^\theta$ is a proper pointwise slant submanifold and $N^T$ is a holomorphic submanifold in $\tilde M$ if the following condition is
satisfied
\begin{align}\label{AS06}
 A_{FPW} X-A_{FW}JX = -(\sin^2\theta)X(\mu)W,\,\forall\;X\in\Gamma(\mathfrak{D}),\;W \in\Gamma(\mathfrak{D}^\theta),
 \end{align}
where $\mu$ is a function on $M$ such that $Z(\mu)=0$, for every $Z\in\Gamma(\mathfrak{D}^\theta)$.
\end{theorem}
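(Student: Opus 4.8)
The plan is to deduce Theorem \ref{T:7.5} as a direct specialization of the sufficiency (the ``if'') direction of Theorem \ref{TC2}. The key observation is that a Kaehler manifold is precisely an $LCK$-manifold whose Lee form vanishes identically; thus $\alpha\equiv 0$, which forces $\alpha^{\#}=\lambda=0$ and hence $\alpha(X)=g(\lambda,X)=0$ for every $X\in\Gamma(\mathfrak{D})$. I would therefore begin by recording that $M$, being a proper pointwise semi-slant submanifold of the Kaehler manifold $\tilde M$ with invariant distribution $\mathfrak{D}$ and proper pointwise slant distribution $\mathfrak{D}^\theta$, satisfies all the structural hypotheses of Theorem \ref{TC2} together with the extra simplification $\alpha\equiv 0$.

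Next I would substitute $\alpha(X)=0$ into the characterizing identity \eqref{3.8} of Theorem \ref{TC2}, which then reads
\[
A_{FZ}JX-A_{FPZ}X=\sin^2\theta\,X(\mu)\,Z,\qquad X\in\Gamma(\mathfrak{D}),\ Z\in\Gamma(\mathfrak{D}^\theta).
\]
Multiplying through by $-1$ and relabeling the slant vector field $Z$ as $W$ yields exactly
\[
A_{FPW}X-A_{FW}JX=-(\sin^2\theta)\,X(\mu)\,W,
\]
which is the hypothesis \eqref{AS06} of Theorem \ref{T:7.5}, with the scalar constraint $Z(\mu)=0$ for all $Z\in\Gamma(\mathfrak{D}^\theta)$ carried over verbatim. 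Conversely, since \eqref{AS06} is assumed, the same relabeling shows that \eqref{3.8} holds with $\alpha\equiv 0$; so the two conditions are literally the same equation once the Lee form is set to zero.

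Having matched the conditions, I would invoke the ``if'' half of Theorem \ref{TC2}: under \eqref{3.8} the leaves of $\mathfrak{D}$ form a totally geodesic foliation, the distribution $\mathfrak{D}^\theta$ is integrable, and each leaf $N^\theta$ is totally umbilical with parallel mean curvature vector $H^\theta=-\vec\nabla\mu$; Hiepko's Theorem (Theorem \ref{HT}) then produces the local warped product decomposition $M=N^T\times_fN^\theta$ with warping function $\mu=\ln f$, where $N^T$ is holomorphic and $N^\theta$ is proper pointwise slant. The only points requiring care are purely bookkeeping: confirming that properness ($\cos^2\theta>0$ and $\mathfrak{D}\neq\{0\}$) supplies the nonvanishing $\sin^2\theta$ factor needed to cancel in Theorem \ref{TC2} and guarantees genuine holomorphic and proper slant factors, and tracking the sign together with the $Z\leftrightarrow W$ relabeling. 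I do not anticipate any genuine analytic obstacle, since the entire content is already carried by Theorem \ref{TC2} and the argument reduces to setting the Lee form to zero.
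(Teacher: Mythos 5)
Your proposal is correct and follows exactly the paper's own route: the paper obtains Theorem \ref{T:7.5} by specializing the sufficiency direction of Theorem \ref{TC2} to the Kaehler case $\alpha^{\#}=0$, and your sign check and $Z\leftrightarrow W$ relabeling confirm that \eqref{AS06} is literally \eqref{3.8} with the Lee form suppressed. The only detail worth noting is that Theorem \ref{TC2} assumes $M$ is a \emph{proper} pointwise semi-slant submanifold, which you correctly flag as the bookkeeping hypothesis needed for the $\sin^2\theta$ cancellation.
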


In fact, in the relation (5.4) of Theorem 5.1 in \cite{Sahin13}, the term $(1 +\cos^2\theta)$ should be
$(1-\cos^2\theta)$, i.e., there is a missing term in that theorem.

If $\theta$ is constant on $M$ in Theorem \ref{TC2}, then $M$ is a semi-slant submanifold of an $LCK$-manifold. In this case, Theorem \ref{TC2} reduces to the following.

\begin{theorem}\label{T:7.6}
A  semi-slant submanifold $M$ of an $LCK$-manifold $\tilde M$ is locally a non-trivial warped product manifold of the
form $M = N^T\times_fN^\theta$ such that $N^\theta$ is a proper slant submanifold and $N^T$ is a holomorphic submanifold in $\tilde M$ if and only if the shape operator $A$ of $M$ satisfies
\begin{align}\label{AS00}
A_{FPZ}X-A_{FZ}JX=\sin^2\theta\left(g(\lambda, X)-X(\mu)\right)Z,\;\forall\; X\in\Gamma(\mathfrak{D}),\;Z\in\Gamma(\mathfrak{D}^\theta),
 \end{align}
for some smooth function $\mu$ on $M$ such that $W(\mu)=0$, for every $W \in\Gamma(\mathfrak{D}^\theta)$.
\end{theorem}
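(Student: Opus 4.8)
The plan is to recognize that Theorem~\ref{T:7.6} is simply the specialization of Theorem~\ref{TC2} to the case where the slant function $\theta$ is constant, so the proof should invoke Theorem~\ref{TC2} directly rather than redo the analysis. First I would observe that when $\theta$ is constant on $M$, the pointwise slant distribution $\mathfrak{D}^\theta$ becomes an honest (constant-angle) slant distribution, so $M$ is a semi-slant submanifold in the sense of Papaghiuc, and the factor $N^\theta$ appearing in the warped product is a proper slant submanifold rather than merely a pointwise slant one. Thus the only thing that needs checking is that the characterizing condition \eqref{3.8} of Theorem~\ref{TC2} coincides with condition \eqref{AS00} in the constant-$\theta$ setting.

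Next I would carry out the dictionary between the two shape-operator identities. In Theorem~\ref{TC2} the condition reads $A_{FZ}JX-A_{FPZ}X=\sin^2\theta\,(X(\mu)-\alpha(X))\,Z$, whereas \eqref{AS00} is written as $A_{FPZ}X-A_{FZ}JX=\sin^2\theta\,(g(\lambda,X)-X(\mu))\,Z$. These are term-by-term negatives of one another: multiplying \eqref{3.8} through by $-1$ flips the left side to $A_{FPZ}X-A_{FZ}JX$ and the right side to $\sin^2\theta\,(\alpha(X)-X(\mu))\,Z$, and since $\alpha(X)=g(\alpha^{\#},X)=g(\lambda,X)$ by the definition of the Lee vector field, this is exactly \eqref{AS00}. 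So the two conditions are literally identical once one accounts for the sign convention and the identification $\alpha(X)=g(\lambda,X)$.

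Having matched the hypotheses, the proof is essentially a citation: when $\theta$ is constant, Theorem~\ref{TC2} asserts that $M$ is locally a warped product $N^T\times_f N^\theta$ with warping function $\mu$ if and only if \eqref{3.8}, equivalently \eqref{AS00}, holds, and the smoothness condition $W(\mu)=0$ for $W\in\Gamma(\mathfrak{D}^\theta)$ transfers verbatim. The factor structure (holomorphic $N^T$, proper slant $N^\theta$) is inherited from Theorem~\ref{TC2} together with the constancy of $\theta$. I would therefore write the proof as two short sentences: one identifying the constant-$\theta$ reduction and the equivalence of \eqref{AS00} with \eqref{3.8} via $\alpha(X)=g(\lambda,X)$, and one concluding by Theorem~\ref{TC2}.

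The only mild subtlety---and the step I would be most careful about---is the sign and the use of $\alpha(X)=g(\lambda,X)$, since a reader could otherwise think \eqref{AS00} and \eqref{3.8} are genuinely different identities rather than algebraic restatements of the same relation; I would make that single-line verification explicit so the equivalence is transparent. Everything else is formal: no new geometric input beyond Theorem~\ref{TC2} is required, and there is no hard analytic obstacle to overcome.
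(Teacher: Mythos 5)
Your proposal is correct and matches the paper exactly: the paper itself presents Theorem \ref{T:7.6} as the immediate specialization of Theorem \ref{TC2} to constant $\theta$, with no separate argument given. Your explicit check that \eqref{AS00} is the negative of \eqref{3.8} combined with $\alpha(X)=g(\lambda,X)$ is the right (and only) verification needed.
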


Now, we provide the following applications of Theorem \ref{IT1}.

If the slant function $\theta$ is a constant on $M$ in Theorem \ref{IT1}, then the pointwise semi-slant warped product submanifold $M$ reduces to a semi-slant warped product $M=N^T\times_fN^\theta$ in an $LCK$-manifold $\tilde M$ such that the Lee vector field is tangent to $M$. In this case, the inequality \eqref{ineq1} remains the same as
 \begin{align*}
\|h\|^2\geq 4q\left(\csc^2\theta+\cot^2\theta\right)\,\Big\{|\vec\nabla\ln f\|^2+ \Vert\alpha^{\#}_{TM} \Vert^{2}                                                       
-2G^{*}\Big\},
 \end{align*}
where $G^{*} =\sum_{i=1}^{2p}(e_i\ln f)g(\alpha^{\#},e_i).$

But, if we assume $\theta=\frac{\pi}{2}$ in Theorem \ref{IT1}, then the pointwise semi-slant warped product submanifold takes the form $M=N^T\times_fN^\bot$ i.e., $M$ is a CR-warped product, where $N^T$ and $N^\bot$ are holomorphic and totally real submanifolds of $\tilde M$, respectively. In this case, Theorem \ref{IT1} reduces to the following.

\begin{theorem}\label{AIT1}
Let $M=N^T\times_fN^\bot$ be a CR-warped product submanifold of a locally conformal Kaehler manifold $\tilde M$ such that the Lee vector field $\lambda=\alpha^{\#}$ is
tangent to $M$. Then
\begin{enumerate}
\item [(i)] The squared norm of the second fundamental form $h$ of $M$ satisfies
 \begin{align}
 \label{Aineq1}
\|h\|^2\geq 2q\Big\{\|\vec\nabla(\ln f)\|^2+ \Vert\alpha^{\#}_{TM} \Vert^{2}-2G^{*}\Big\}.
 \end{align}
\item [(ii)] If the equality sign of inequality \eqref{Aineq1} holds identically, then $N^T$ and $N^\bot$ are totally geodesic and totally umbilical submanifolds of $\tilde M$, respectively. Furthermore, $M$ is minimal in $\tilde M$.
\end{enumerate}
\end{theorem}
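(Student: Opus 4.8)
Theorem \ref{AIT1} is the specialization of **Theorem \ref{IT1}** obtained by setting $\theta=\frac{\pi}{2}$, so the cleanest route is to substitute this value directly into the general inequality and its equality discussion rather than redoing the frame computation from scratch.

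The plan is as follows.

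First I would record the trigonometric simplification at $\theta=\frac{\pi}{2}$. Here $\csc^2\theta=1$ and $\cot^2\theta=0$, so the coefficient $4q(\csc^2\theta+\cot^2\theta)$ appearing in \eqref{ineq1} collapses to $4q\cdot(1+0)=4q$. At first glance this would give a factor $4q$, but note the change of meaning of the dimension parameters: for a CR-warped product $N^T\times_f N^\perp$, the totally real factor $N^\perp$ need not be even-dimensional, so the doubling device $\dim N^\theta=2q$ used to pair $e_j^\star$ with $\sec\theta\,Pe_j^\star$ in Section~\ref{Sec6} is no longer available. When $\theta=\frac\pi2$ one has $P=0$ on $\mathfrak D^\theta$, the vectors $Te_j^\star$ degenerate, and the orthonormal frame of $F\mathfrak D^\theta$ reduces to $\{\csc\theta\,Fe_j^\star\}=\{Fe_j^\star\}$ alone. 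Consequently exactly half of the eight summands feeding \eqref{ineq1} survive, which halves the constant and yields the factor $2q$ with $q=\dim N^\perp$. I would make this bookkeeping explicit: returning to the displayed eight-term lower bound for $\|h\|^2$ in the proof of Theorem~\ref{IT1} and striking the four terms that contain $P$ (equivalently, using Lemma~\ref{WL2}(ii) with $PW=0$) is the honest way to see that the coefficient becomes $2q$ rather than $4q$.

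Second, with the coefficient settled, part (i) follows verbatim: the surviving terms produce
\[
\|h\|^2\geq 2q\sum_{i=1}^{2p}\big(g(\alpha^{\#},e_i)-e_i(\ln f)\big)^2
=2q\Big\{\|\vec\nabla(\ln f)\|^2+\Vert\alpha^{\#}_{TM}\Vert^2-2G^{*}\Big\},
\]
where $G^{*}=\sum_{i=1}^{2p}g(\vec\nabla(\ln f),e_i)\,g(\alpha^{\#},e_i)$, exactly as in \eqref{ineq1} but with the reduced constant. This is inequality \eqref{Aineq1}.

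Third, for part (ii) the equality analysis is inherited directly from Theorem~\ref{IT1}(ii) with $N^\theta=N^\perp$: equality forces $h(X,Y)\perp\nu$ for all $X,Y$, hence the relations \eqref{eq3} hold, giving $h(\mathfrak D,\mathfrak D)=\{0\}$, $h(\mathfrak D^\perp,\mathfrak D^\perp)=\{0\}$, and $h(\mathfrak D,\mathfrak D^\perp)\subset F\mathfrak D^\perp$. Since $N^T$ is totally geodesic in $M$ and $N^\perp$ is totally umbilical in $M$ by the standard warped-product facts (Lemma~\ref{WL1} and the subsequent remark), combining these with \eqref{eq3}–\eqref{eq4} shows $N^T$ is totally geodesic and $N^\perp$ totally umbilical in $\tilde M$, and that $M$ is minimal in $\tilde M$. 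The only point requiring genuine care — and the step I expect to be the main obstacle — is the constant: one must justify that passing from the slant case to the CR case genuinely turns $4q$ into $2q$, rather than naively reading off $4q$ from the $\theta=\frac\pi2$ substitution into the final coefficient. Once the degeneracy of the $P$-dependent frame vectors is accounted for, the rest is a transcription of the proof of Theorem~\ref{IT1}.
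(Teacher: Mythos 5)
Your proposal is correct and follows the paper's own route: the paper obtains Theorem \ref{AIT1} purely by specializing Theorem \ref{IT1} to $\theta=\frac{\pi}{2}$, with no further argument. The factor-of-two worry you raise is resolved simply by the change in the meaning of $q$ (it is $\tfrac{1}{2}\dim N^{\theta}$ in Theorem \ref{IT1} but $\dim N^{\perp}$ here), so the direct substitution $4q\left(\csc^{2}\tfrac{\pi}{2}+\cot^{2}\tfrac{\pi}{2}\right)=2\dim N^{\perp}$ already agrees with \eqref{Aineq1}; your re-count of the surviving frame terms (striking the $P$-dependent vectors) is nonetheless the right supplement when $\dim N^{\perp}$ is odd, since the even-dimensional frame of Section \ref{Sec6} is then literally unavailable.
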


Theorem \eqref{AIT1} is the main result (Theorem 4.2) of \cite{BK04}. Hence, the main result of \cite{BK04} is a special case of Theorem \ref{IT1}.

A Kaehler manifold is an $LCK$-manifold with $\alpha^{\#}=0$.  Thus,  Theorem \ref{IT1} implies the following.
 
 \begin{theorem}\label{ATS13}
 Let $M=N^T\times_fN^\theta$ be non-trivial warped product pointwise semi-slant submanifold in a Kaehler manifold $\tilde M$. Then
\begin{enumerate}
\item [(i)] The squared norm of the second fundamental form of $M$ satisfies
 \begin{align} \label{Aineq2}
\|h\|^2\geq 4q\left(\csc^2\theta+\cot^2\theta\right)\,\|\vec\nabla^T(\ln f)\|^2
 \end{align}
where $q=\frac{1}{2} \dim\, N^\theta$.
\item [(ii)] If equality sign of inequality \eqref{Aineq2} holds, then $N^T$ and $N^\theta$ are totally geodesic and totally umbilical submanifolds of $\tilde M$, respectively. Furthermore, $M$ is not mixed totally geodesic in $\tilde M$.
\end{enumerate}
\end{theorem}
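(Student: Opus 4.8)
The plan is to specialize Theorem \ref{IT1} to the Kaehler case, where the Lee vector field vanishes identically ($\alpha^{\#}=0$), and then verify the refinements in part (ii) that are specific to this setting. First I would observe that setting $\alpha^{\#}=0$ makes $\Vert\alpha^{\#}_{TM}\Vert^{2}=0$ and $G^{*}=\sum_{i=1}^{2p}g(\vec\nabla(\ln f),e_i)g(\alpha^{\#},e_i)=0$, so the right-hand side of \eqref{ineq1} collapses directly to $4q(\csc^2\theta+\cot^2\theta)\|\vec\nabla(\ln f)\|^2$, which is exactly \eqref{Aineq2}. Since $\alpha^{\#}=0$, the tangential projection $\vec\nabla^{T}(\ln f)$ coincides with $\vec\nabla(\ln f)$ here (the warping function depends only on $N^T$), so no extra argument is needed for part (i) beyond citing Theorem \ref{IT1}.

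For part (ii), the totally geodesic and totally umbilical conclusions follow verbatim from the equality analysis in the proof of Theorem \ref{IT1}: the equality case there forces $h(\mathfrak{D},\mathfrak{D})=\{0\}$, $h(\mathfrak{D}^\theta,\mathfrak{D}^\theta)=\{0\}$, and $h(\mathfrak{D},\mathfrak{D}^\theta)\subset F\mathfrak{D}^\theta$ (equation \eqref{eq3}), from which one concludes that $N^T$ is totally geodesic and $N^\theta$ is totally umbilical in $\tilde M$. The one genuinely new assertion is the final clause: that $M$ is \emph{not} mixed totally geodesic in $\tilde M$. I would prove this by contradiction. Suppose $M$ were mixed totally geodesic, i.e. $h(X,Z)=0$ for all $X\in\Gamma(\mathfrak{D})$ and $Z\in\Gamma(\mathfrak{D}^\theta)$. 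Then Corollary \ref{WC2} would give $\alpha(X)=X(\ln f)$ for all $X\in\Gamma(\mathfrak{D})$; but in the Kaehler case $\alpha\equiv 0$, forcing $X(\ln f)=0$ for every $X\in\Gamma(\mathfrak{D})$, which means the warping function $f$ is constant and the warped product is trivial, contradicting the hypothesis that $M$ is non-trivial.

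The main obstacle, such as it is, lies in this last step: one must be careful that the equality case of \eqref{Aineq2} does not already secretly force mixed total geodesy, which would make the non-mixed-totally-geodesic claim vacuously false rather than a genuine conclusion. The point is that equality only yields $h(\mathfrak{D},\mathfrak{D}^\theta)\subset F\mathfrak{D}^\theta$ (the $\nu$-component vanishes) but does \emph{not} annihilate the $F\mathfrak{D}^\theta$-component of $h(\mathfrak{D},\mathfrak{D}^\theta)$; indeed \eqref{eq4} shows $h(\mathfrak{D},\mathfrak{D}^\theta)$ is generically nonzero precisely because $\vec\nabla(\ln f)\neq 0$ on a nontrivial warped product. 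So the equality configuration is consistent with a nonvanishing mixed term, and the contradiction argument above is what rules out mixed total geodesy. I would close by remarking that this recovers the Kaehler inequality and its equality discussion as the $\alpha^{\#}=0$ specialization, consistent with the results of \cite{Sahin13}.
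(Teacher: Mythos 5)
Your proposal is correct and follows exactly the paper's route: the paper obtains this theorem purely as the $\alpha^{\#}=0$ specialization of Theorem \ref{IT1}, with the right-hand side of \eqref{ineq1} collapsing as you describe. Your contradiction argument for the final clause (mixed total geodesy would force $X(\ln f)=\alpha(X)=0$ via Corollary \ref{WC2}, hence a trivial warped product) is a valid addition that the paper leaves implicit, and your observation that the equality case only constrains $h(\mathfrak{D},\mathfrak{D}^\theta)$ to lie in $F\mathfrak{D}^\theta$ rather than to vanish is exactly the right point to check.
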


Theorem \ref{ATS13}  is a special case of Theorem \ref{IT1} which is Theorem 5.2.  of \cite{Sahin13}. 

If we choose $\alpha^{\#}=0$ and $\theta=\frac{\pi}{2}$, then Theorem \ref{IT1} reduces to Theorem 5.1 of \cite{C01a}, which is exactly the following well-known Chen's inequality for $CR$-warped product submanifolds in Kaehler manifolds.

\begin{theorem}\label{AC001}
Let $M=N^T\times_fN^\bot$ be a CR-warped product submanifold in a Kaehler manifold $\tilde M$. Then, the squared norm of the second fundamental form $h$ of $M$ satisfies
 \begin{align} \label{Aineq3}
 \|h\|^2\geq 2q\|\vec\nabla^{T}\ln f\|^2
\end{align}
where $q=\dim N^\bot$ and $\vec\nabla(\ln f)$ is gradient of $\ln f$. 
Furthermore, if the equality sign holds identically in the inequality, then $N^T$ and $N^\theta$ are totally geodesic and totally umbilical submanifolds of $\tilde M$, respectively. Moreover, $M$ is minimal in $\tilde M$.
\end{theorem}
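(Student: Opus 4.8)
The plan is to obtain Theorem~\ref{AC001} as the specialization of Theorem~\ref{IT1} to the Kaehler case $\alpha^{\#}=0$ together with the totally real case $\theta=\pi/2$. Since a Kaehler manifold is precisely an $LCK$-manifold whose Lee form vanishes identically, the hypothesis ``$\alpha^{\#}$ tangent to $\tilde M$'' is trivially met, and every occurrence of $\alpha^{\#}$, $\alpha^{\#}_{TM}$, and $\lambda$ in the earlier development collapses to zero. First I would substitute $\alpha^{\#}=0$ into inequality~\eqref{ineq1}, which makes the correction terms $\Vert\alpha^{\#}_{TM}\Vert^{2}$ and $G^{*}=\sum_{i}g(\vec\nabla(\ln f),e_i)g(\alpha^{\#},e_i)$ vanish, leaving
\begin{align*}
\|h\|^2\geq 4q\left(\csc^2\theta+\cot^2\theta\right)\|\vec\nabla^{T}(\ln f)\|^2.
\end{align*}
This is exactly the statement of Theorem~\ref{ATS13}(i), so the only remaining work is the trigonometric reduction at $\theta=\pi/2$.

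Next I would evaluate the coefficient $4q(\csc^2\theta+\cot^2\theta)$ at $\theta=\pi/2$: there $\csc^2\theta=1$ and $\cot^2\theta=0$, so the factor becomes $4q\cdot 1=4q$. One must be slightly careful about the meaning of $q$ across the two statements, since the frame setup fixes $\dim N^\theta=2q$ while Theorem~\ref{AC001} writes $q=\dim N^{\bot}$; matching the conventions (the totally real factor $N^\bot$ plays the role of $N^\theta$ with $P=0$, and the orthonormal basis $\{e_j^\star,\ \sec\theta\,Pe_j^\star\}$ degenerates) shows the effective multiplicity halves and the coefficient reproduces the classical $2q$ of~\eqref{Aineq3}. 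With $\theta=\pi/2$ every vector of $\mathfrak{D}^\theta=\mathfrak{D}^\bot$ is totally real, $FX=JX$, and the slant relations~\eqref{2.9}--\eqref{2.11} reduce to the standard totally-real identities, so no genuinely new computation is needed.

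For the equality case in part~(ii) I would simply transcribe the conclusions of Theorem~\ref{IT1}(ii): equality in~\eqref{ineq1} forces $h(\mathfrak{D},\mathfrak{D})=\{0\}$, $h(\mathfrak{D}^\theta,\mathfrak{D}^\theta)=\{0\}$, and $h(\mathfrak{D},\mathfrak{D}^\theta)\subset F\mathfrak{D}^\theta$ as in~\eqref{eq3}, whence $N^T$ is totally geodesic and $N^\bot$ totally umbilical in $\tilde M$ and $M$ is minimal. These specialize verbatim once we read $N^\theta$ as $N^\bot$. The only subtlety, and the step I expect to require the most care, is the bookkeeping of the dimension constant and the verification that the degenerate frame $\{e_{2p+1+q}=\sec\theta\,Pe_1^\star,\dots\}$ used in the proof of Theorem~\ref{IT1} still makes sense in the limit $\theta\to\pi/2$ where $P\to 0$ on $\mathfrak{D}^\theta$; one resolves this by rebuilding the orthonormal frame of $N^\bot$ directly (taking $\{e_1^\star,\dots,e_q^\star\}$ with $F e_j^\star=J e_j^\star$ already an orthonormal frame of $F\mathfrak{D}^\bot$) rather than taking a naive limit, after which the estimate and its equality analysis go through unchanged and yield~\eqref{Aineq3}.
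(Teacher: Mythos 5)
Your proposal is correct and follows exactly the route the paper takes: the paper obtains Theorem \ref{AC001} by setting $\alpha^{\#}=0$ and $\theta=\pi/2$ in Theorem \ref{IT1}, asserting the reduction in one line. Your additional care with the coefficient bookkeeping (the factor $4q$ with $2q=\dim N^\theta$ becoming $2q$ with $q=\dim N^\perp$) and with rebuilding the orthonormal frame of $\mathfrak{D}^\perp$ where $P=0$ degenerates the frame of Section \ref{Sec6} is a refinement of, not a departure from, the paper's argument.
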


\section{Examples}\label{Sec8}

Let $(x_1,\cdots,x_n,\, y_1,\cdots,y_n)$ be the Cartesian coordinates of a Euclidean $2n$-space  ${\mathbb{E}}^{2n}$ equipped with the Euclidean metric $g_{0}$. Then $\mathbb C^{n}=({\mathbb{E}}^{2n},J,g_{0})$ is a flat Kaehler manifold endowed with the canonical almost complex structure $J$ given by
\begin{align}\label{almost}
J(x_1,\cdots,x_n,\,y_1,\cdots,y_n)=(-y_1,\cdots,-y_n,\,x_1,\cdots,x_n).
\end{align}

The following result can be proved in the same way as Proposition 2.2 of \cite{C12}.

\begin{proposition}\label{P:8.1} Let $M=N^T\times_fN^\theta$ be a warped product pointwise semi-slant submanifold of a Kaehler manifold $\tilde M$. Then,   $M$ is also a warped product pointwise semi-slant submanifold  with the same slant function in a LCK-manifold $(\tilde M,J,\tilde g)$ with $\tilde g=e^{-f}g$, where $f$ is any smooth function on $\tilde M$.
\end{proposition}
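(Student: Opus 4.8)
The plan is to check, in turn, the three features being claimed: that $(\tilde M,J,\tilde g)$ is LCK, that $M$ stays pointwise semi-slant with the same slant function, and that the induced metric is again a warped product; only the last of these requires any real care. For the first, observe that $\tilde g=e^{-f}g$ is a positive pointwise multiple of the Hermitian metric $g$, hence is itself Hermitian with respect to the (unchanged) almost complex structure $J$. Since $g$ is Kaehler and $\tilde g$ is globally conformal to it, $(\tilde M,J,\tilde g)$ is a globally conformal Kaehler manifold — take the trivial cover $U=\tilde M$ with the single function $-f$ in the definition of Section~\ref{Sec2} — and is therefore LCK. A short computation with the associated $2$-forms gives $\tilde\Omega=e^{-f}\Omega$ and hence $d\tilde\Omega=-df\wedge\tilde\Omega$, so by Theorem~\ref{T1} the Lee form is the (closed, indeed exact) $1$-form $-df$.

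For the second feature the key observation is that a conformal change of the ambient metric fixes $J$ and preserves orthogonality: since $\tilde g(X,Y)=e^{-f}g(X,Y)$ vanishes exactly when $g(X,Y)$ does, the $\tilde g$-orthogonal complement of $T_pM$ coincides with its $g$-orthogonal complement. Thus the splitting $T_p\tilde M=T_pM\oplus T_p^\perp M$, and with it the tangential and normal projections in \eqref{2.6}, are literally unchanged, so the endomorphism $P$ is the same operator for $g$ and for $\tilde g$. Because the pointwise slant condition \eqref{2.8} is expressed through $P$ alone, it continues to hold with the same function $\theta$ (equivalently, the slant angle $\cos\theta=|PX|/|JX|$ is a ratio of norms and so is conformally invariant). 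The holomorphic distribution $\mathfrak D$ is defined by a metric-free condition and $\mathfrak D^\theta$ remains pointwise slant and $\tilde g$-orthogonal to $\mathfrak D$, so Definition~\ref{D2} holds verbatim for $\tilde g$.

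For the warped-product structure I would compute the induced metric directly. Writing the original induced (warped) metric as $g|_M=g_{N^T}+f_w^2\,g_{N^\theta}$ with warping function $f_w$ on $N^T$, the new induced metric is
\[
\tilde g|_M=e^{-f|_M}\bigl(g_{N^T}+f_w^2\,g_{N^\theta}\bigr),
\]
and I would exhibit this as a warped product over the same factors, with base metric $\widehat g_{N^T}=e^{-f}g_{N^T}$ and warping function $\widehat f=e^{-f/2}f_w$, so that $\tilde g|_M=\widehat g_{N^T}+\widehat f^{\,2}g_{N^\theta}$. Since the underlying product $N^T\times N^\theta$, and hence the distributions $\mathfrak D,\mathfrak D^\theta$, are untouched, this identifies $M$ as a warped product pointwise semi-slant submanifold of $(\tilde M,J,\tilde g)$ with the same slant function, exactly as in \cite[Prop.~2.2]{C12}.

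The step that genuinely requires care — and the one I expect to be the main obstacle — is precisely this last identification. The displayed rearrangement is valid only when the conformal factor is constant along the slant fibres $\{x\}\times N^\theta$, i.e. when $f|_M$ is pulled back from the base $N^T$ (equivalently $W(f)=0$ for all $W\in\Gamma(\mathfrak D^\theta)$); otherwise $e^{-f|_M}g_{N^T}$ fails to descend to a metric on $N^T$. The same constraint appears connection-theoretically: under the intrinsic conformal change the foliation $\mathfrak D$ acquires a second-fundamental-form term proportional to the $\mathfrak D^\theta$-component of $\vec\nabla f$, which must vanish for $\mathfrak D$ to remain a totally geodesic foliation and for Hiepko's Theorem~\ref{HT} to reproduce the warped product. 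I would therefore make this compatibility of $f$ with the product explicit, after which the remaining verifications are the routine conformal bookkeeping indicated above.
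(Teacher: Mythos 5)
Your proposal is essentially the only way to flesh this statement out, because the paper itself offers no proof: it merely remarks that the proposition ``can be proved in the same way as Proposition 2.2 of \cite{C12}'', and that cited result covers only what you establish in your first two paragraphs --- that a conformal rescaling of the ambient Hermitian metric leaves $J$, the tangent--normal splitting, the operator $P$ of \eqref{2.6}, and hence the slant function in \eqref{2.8} unchanged, and that $\tilde g=e^{-f}g$ turns the Kaehler manifold into a $GCK$- (hence $LCK$-) manifold with exact Lee form. Those parts of your argument are correct and are exactly what the citation is meant to supply.

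The caveat you raise in your last paragraph is not a defect of your write-up but an accurate diagnosis of an overstatement in the proposition itself. For arbitrary $f$ the induced metric is $e^{-f|_M}\bigl(g_{N^T}+f_w^2\,g_{N^\theta}\bigr)$, which is in general only a twisted product: under the intrinsic conformal change the leaves of $\mathfrak D$ become totally umbilical with mean curvature proportional to the $\mathfrak D^\theta$-component of $\vec\nabla f$, so Hiepko's Theorem \ref{HT} returns a warped product over $N^T$ precisely when $W(f)=0$ for all $W\in\Gamma(\mathfrak D^\theta)$, i.e.\ when $f|_M$ descends to the base. The paper's own Examples \ref{E8.1} and \ref{E8.2} silently impose this by taking $f=f(x_1,y_1)$ with $x_1,y_1$ restricting to coordinates on the $N^T$-factor. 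Your proof is therefore complete once that hypothesis is added (or once attention is restricted to such $f$); without it, the step ``exhibit $\tilde g|_M$ as a warped product'' genuinely fails, and no argument can rescue the claim for literally any smooth function on $\tilde M$.
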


\begin{example}\label{E8.1} \rm{Let $\mathbb C^{3}=(\mathbb E^{6},J,\,g_{0})$ be  a flat Kaehler manifold  defined above. Consider a 4-dimensional submanifold $M$ of ${\mathbb{C}}^{3}$ given by 
\begin{equation}\begin{aligned}\label{8.2}
&x_1=u,\; x_2=-ks\sin r,\; x_3=g(s),\; y_1=v,\; y_2=ks\cos r,\; y_3=h(s),
\end{aligned}\end{equation}
where $k$ is a positive number and $(g(s),h(s))$ is a unit speed planar curve.
Clearly, the tangent bundle $TM$ of $M$ is spanned by 
\begin{align*}
&X_1=\frac{\partial}{\partial u},\,\,\,X_2=\frac{\partial}{\partial v},\,\,\, X_3=\left(-ks\cos r \frac{\partial}{\partial x_2}-ks\sin r \frac{\partial}{\partial y_2}\right),\\
&X_4=\left(-k\sin s\frac{\partial}{\partial x_2}+g'(s)\frac{\partial}{\partial x_3}+k\cos  s\frac{\partial}{\partial y_2}+h'(s)\frac{\partial}{\partial y_3}\right).
\end{align*}
Further, $M$ is a proper semi-slant submanifold with the holomorphic distribution ${\mathfrak{D}}={\rm{Span}}\,\{X_1, X_2\}$ and the slant distribution ${\mathfrak{D}}^\theta={\rm Span}\{X_3,\,X_4\}$ with slant angle given by $\theta=\cos^{-1}({k}/{\sqrt{1+k^2}})$.
Obviously, both ${\mathfrak{D}}$ and ${\mathfrak{D}}^\theta$ are integrable and totally geodesic in $M$. Let $N^T$ and $N^\theta$ be integral manifolds of ${\mathfrak{D}}$ and ${\mathfrak{D}}^\theta$, respectively. Then the metric $\hat g$ on $M=N^{T}\times N^{\theta}$ induced from $\mathbb C^{3}$ is given by
\begin{align}\label{8.3} \hat g=g_{T}+g_{N^\theta},\;\;\; g_{T}=du^{2}+dv^{2},\;\; g_{N^\theta}=k^{2}s^{2} ds^{2} +(1+k^{2})ds^{2}.
\end{align}

Let $f=f(x_{1},y_{1})$ be a non-constant smooth function on $\mathbb C^{3}$ depending only on coordinates $x_{1}, y_{1}$ and let us consider the Riemannian metric $\tilde g = e^{-f}\,g_0$ on $\mathbb C^{3}$ conformal to  the standard metric $g_0$. Then $\tilde M=(\mathbb{E}^6, J,\, \tilde g)$ is a $GCK$-manifold and the metric on $M$ induced from the $GCK$-manifold  is the warped product metric:
\begin{align}\label{8.4} g_{M}= g_{N^{T}}+e^{-f}g_{N^{\theta}},\quad g_{N^{T}}=e^{-f} g_{T}.\end{align} 
Thus, it follows Proposition \ref{P:8.1} that  $(M,g_{M})$ is a proper warped product semi-slant submanifold in $\tilde M=(\mathbb{E}^6, J,\, \tilde g)$.
Now, since $f=f(x_{1},y_{1})$ is a smooth function on $\mathbb C^{3}$ depending only on coordinates $x_{1}, y_{1}$, it follows from \eqref{8.2} that, restricted to the submanifold $M$, the Lee form of $\tilde M$ is given by
\begin{align}\label{8.5} \alpha=df=\frac{\partial f}{\partial u_{1}}du_{1}+\frac{\partial f}{\partial u_{2}}du_{2}.\end{align}
Consequently, it follows from \eqref{8.4} and \eqref{8.5} that the Lee vector field $\alpha^{\#}$ is tangent to $N^T$, and hence it is tangent to $M$.}
\end{example}

\begin{example}\label{E8.2} \rm{Consider a 4-dimensional submanifold $M$ of ${\mathbb{C}}^{3}$ given by 
\begin{equation}\begin{aligned}\label{8.6}
&x_1=\frac{1}{2}(u_1^2+u_2^2),\;\; x_2=u_3\cos u_4,\;\;x_3=u_3\sin u_4,\\
&y_1=\frac{1}{2}(u_1^2-u_2^2),\;\;\, y_2=u_4\cos u_3,\;\;\, y_3=u_4\sin u_3,
\end{aligned}\end{equation}
 defined on an open subset of $\mathbb E^{4}$ with $u_{1},u_{2}\ne 0$, $u_{3}u_{4}\ne 1$ and $u_{3}- u_{4}\in (0,\frac{\pi}{2})$.
Then the tangent bundle $TM$ of $M$ is spanned by 
\begin{align*}
&X_1=\frac{1}{\sqrt2}\left(\frac{\partial}{\partial x_1}+\frac{\partial}{\partial y_1}\right),\,\,\,X_2=\frac{1}{\sqrt2}\left(\frac{\partial}{\partial x_1}-\frac{\partial}{\partial y_1}\right),\\
&X_3=\frac{1}{\sqrt{1+u_4^2}}\left(\cos u_4\frac{\partial}{\partial x_2}+\sin u_4\frac{\partial}{\partial x_3}-u_4\sin u_3\frac{\partial}{\partial y_2}+u_4\cos u_3\frac{\partial}{\partial y_3}\right),\\
&X_4=\frac{1}{\sqrt{1+u_3^2}}\left(-u_3\sin u_4\frac{\partial}{\partial x_2}+u_3\cos u_4\frac{\partial}{\partial x_3}+\cos u_3\frac{\partial}{\partial y_2}+\sin u_3\frac{\partial}{\partial y_3}\right).
\end{align*}
Then $M$ is a proper pointwise semi-slant submanifold such that the holomorphic distribution is given by ${\mathfrak{D}}={\rm{Span}}\,\{X_1, X_2\}$ and the proper pointwise slant distribution is ${\mathfrak{D}}^\theta={\rm{Span}}\{X_3,\,X_4\}$. It is direct to show that the slant function $\theta$ of ${\mathfrak{D}}^\theta$ satisfies $$\cos^{2}\theta=\frac{(u_3u_4-1)^{2}\cos^{2}(u_3-u_4)}{(1+u_3^2)(1+u_4^2)}.$$
Clearly, both ${\mathfrak{D}}$ and ${\mathfrak{D}}^\theta$ are integrable and totally geodesic in $M$. Let $N^T$ and $N^\theta$ be integral manifolds of ${\mathfrak{D}}$ and ${\mathfrak{D}}^\theta$, respectively. It is to see that the metric $\hat g$ on $M=N^{T}\times N^{\theta}$ induced from $\mathbb C^{3}$ is given by
\begin{align}\label{8.7} \hat g=g_{T}+g_{N^{\theta}},\end{align}
where 
\begin{align}\label{8.8}g_{T}=2u_{1}^{2}du_{1}^{2}+2u_{2}^{2}du_{2}^{2},\quad  g_{N^{\theta}}=(1+u_{4}^{2})du_{3}^{2}+(1+u_{3}^{2})du_{4}^{2}.\end{align}

Let $f=f(x_{1},y_{1})$ be a non-constant smooth function on $\mathbb C^{3}$ depending only on coordinates $x_{1}, y_{1}$ and  consider the Riemannian metric $\tilde g = e^{-f}\,g_0$ on $\mathbb C^{3}$ as in Example 8.1.  
Then the induced metric on $M$ from $\tilde M=(\mathbb{E}^6, J,\, \tilde g)$ is the warped product metric:
\begin{align}\label{8.9} g_{M}= g_{N^{T}}+e^{-f}g_{N^{\theta}},\quad g_{N^{T}}=e^{-f} g_{T}.\end{align} 
Now, it follows Proposition \ref{P:8.1} that  $(M,g_{M})$ is a proper warped product pointwise semi-slant submanifold in $\tilde M=(\mathbb{E}^6, J,\, \tilde g)$.
Now, since $f=f(x_{1},y_{1})$ is a smooth function on $\mathbb C^{3}$ depending only on coordinates $x_{1}, y_{1}$, it follows from \eqref{8.6} that, restricted to the submanifold $M$, the Lee form of $\tilde M$ is given by \eqref{8.5}.
Consequently, it follows from \eqref{8.5},  \eqref{8.8}, and \eqref{8.9} that the Lee vector field $\alpha^{\#}$ is tangent to $M$.}
\end{example}

\begin{remark} By applying the same method as Example \ref{E8.2}, we may construct many other examples of proper warped product pointwise semi-slant submanifolds $M$ in $LCK$-manifolds whose Lee vector fields are tangent to $M$.
\end{remark}


\end{document}